\Crefname{section}{\S\!\!}{\S\!\!}
\numberwithin{equation}{subsection}
\theoremstyle{plain}
\newtheorem{lemma}[equation]{Lemma}
\newtheorem{prop}[equation]{Proposition}
\newtheorem{corollary}[equation]{Corollary}
\newtheorem{theorem}[equation]{Theorem}
\newtheorem{thm}[equation]{Theorem}
\newtheorem{conj}[equation]{Conjecture}
\theoremstyle{remark}
\newtheorem{remark}[equation]{Remark}
\newtheorem{example}[equation]{Example}
\theoremstyle{definition}
\newenvironment{enumalph}
{\begin{enumerate}}
{\end{enumerate}}
\newenvironment{enumroman}
{\begin{enumerate}}
{\end{enumerate}}
\newcommand{\A}{\mathbb A}
\let\C\relax
\newcommand{\C}{\mathbb{C}}
\newcommand{\F}{\mathbb F}
\newcommand{\PP}{\mathbb P}
\newcommand{\Qbar}{\mathbb{Q}^{\textup{al}}}
\newcommand{\Q}{\mathbb Q}
\newcommand{\R}{\mathbb R}
\newcommand{\Z}{\mathbb Z}
\newcommand{\fraka}{\mathfrak a}
\newcommand{\frakb}{\mathfrak b}
\newcommand{\frakc}{\mathfrak c}
\newcommand{\frakD}{\mathfrak D}
\newcommand{\frakh}{\mathfrak h}
\newcommand{\frakl}{\mathfrak l}
\newcommand{\frakn}{\mathfrak n}
\newcommand{\frakN}{\mathfrak N}
\newcommand{\frakp}{\mathfrak p}
\newcommand{\frakq}{\mathfrak q}
\newcommand{\calH}{\mathcal H}
\newcommand{\calO}{\mathcal O}
\newcommand{\h}[2]{\frakh^{#1}_{#2}}
\newcommand{\hSiegel}[1]{\calH_{#1}}
\newcommand{\thetacha}[3]{\vartheta_{#1}\hspace{-0.5mm}\big[\hspace{-0.5mm}\begin{smallmatrix}
#2\\#3
\end{smallmatrix}\hspace{-0.5mm} \big]}
\newcommand{\Magma}{\textsf{Magma}}
\newcommand{\injects}{\hookrightarrow}
\newcommand{\tensor}{\otimes} %
\newcommand{\Fhat}{\widehat{F}}
\newcommand{\Gammahat}{\widehat{\Gamma}}
\newcommand{\Zhat}{\widehat{\mathbb Z}}
\newcommand{\Lchi}[1]{L(\tau_j(f), #1, \chi)}
\newcommand{\prodprime}{\sideset{}{^{'}}{\prod}}
\DeclareMathOperator{\Aut}{Aut}
\DeclareMathOperator{\Cl}{Cl}
\DeclareMathOperator{\Char}{char}
\let\det\relax %
\DeclareMathOperator{\det}{det}
\DeclareMathOperator{\disc}{disc}
\DeclareMathOperator{\End}{End}
\DeclareMathOperator{\Frob}{Frob}
\DeclareMathOperator{\Galois}{Gal}
\newcommand{\Gal}[2]{\Galois({#1}\,|\,{#2})}
\DeclareMathOperator{\GL}{GL}
\DeclareMathOperator{\img}{img}
\DeclareMathOperator{\Jac}{Jac}
\DeclareMathOperator{\M}{M}
\DeclareMathOperator{\nrd}{nrd}
\DeclareMathOperator{\rmP}{P}
\DeclareMathOperator{\PGL}{PGL}
\DeclareMathOperator{\PSL}{PSL}
\DeclareMathOperator{\sgn}{sgn}
\DeclareMathOperator{\SL}{SL}
\DeclareMathOperator{\Sym}{Sym}
\DeclareMathOperator{\tr}{tr}
\DeclareMathOperator{\Tr}{Tr}
\DeclareMathOperator{\Nm}{Nm}
\newcommand{\GammaL}{\mathop{\Gamma{\rm L}}\nolimits}
\newcommand{\PGammaL}{\mathop{{\rm P}\Gamma{\rm L}}\nolimits}
\newcommand{\defi}{\textsf}
\newcommand{\lmfdbnf}[1]{%
    \href{https://www.lmfdb.org/NumberField/#1}{\textsf{#1}}}
\newcommand{\lmfdbform}[1]{%
    \StrBefore{#1}{-}[\fieldlabel]%
    \href{https://www.lmfdb.org/ModularForm/GL2/TotallyReal/\fieldlabel/holomorphic/#1}{\textsf{#1}}}
\newcommand{\lmfdbshortform}[2]{%
    \href{https://www.lmfdb.org/ModularForm/GL2/TotallyReal/#1/holomorphic/#1-#2}{\textsf{#2}}}
\definecolor{caribbeangreen}{rgb}{0.0, 0.8, 0.6}
\begin{document}

\setcounter{tocdepth}{1}

\title[17T7 is a Galois group]{The constructive inverse Galois problem via Hilbert modular forms: realizing the transitive group 17T7}
\date{\today}

\author[vanBommel]{Raymond van Bommel}
\address{
    Department of Mathematics,
      Massachusetts Institute of Technology,
      77 Massachusetts Ave.,
      Bldg. 2-336
      Cambridge,
      MA 02139,
      USA;     School of Mathematics,
    University of Bristol,
    Fry Building, Woodland Road, Bristol, BS8 1UG, United Kingdom 
}
\email{\href{mailto:r.vanbommel@bristol.ac.uk}{r.vanbommel@bristol.ac.uk}}
\urladdr{\url{https://raymondvanbommel.nl/}}

\author[Costa]{Edgar Costa}
\address{
  Department of Mathematics,
  Massachusetts Institute of Technology,
  77 Massachusetts Ave.,
  Bldg. 2-336
  Cambridge,
  MA 02139,
  USA
}
\email{\href{mailto:edgarc@mit.edu}{edgarc@mit.edu}}
\urladdr{\url{https://edgarcosta.org}}

\author[Elkies]{Noam D. Elkies}
\address{
 Department of Mathematics, 
 Harvard University, 
 Cambridge, MA
 02138,
 USA
}
\email{\url{elkies@math.harvard.edu}}
\urladdr{\url{https://people.math.harvard.edu/~elkies/}}

\author[Keller]{Timo Keller}
\address{
Institut für Mathematik, Universität Würzburg, Emil-Fischer-Strasse 30, 97074,
Würz\-burg, Germany; 
Rijksuniveriteit Groningen, Bernoulli Institute, Bernoulliborg, Nijenborgh 9, 9747 AG Groningen, The Netherlands; 
Leibniz Universität Hannover, Institut für Algebra, Zahlentheorie und Diskrete Mathematik, Welfengarten 1, 30167 Hannover, Germany}
\email{\href{mailto:math@kellertimo.de}{math@kellertimo.de}}
\urladdr{\url{https://www.timo-keller.de}}

\author[Schiavone]{Sam Schiavone}
\address{
  Massachusetts Institute of Technology,
  Department of Mathematics,
  77 Massachusetts Ave.,
  Bldg. 2-336
  Cambridge,
  MA 02139,
  USA
}
\email{\href{mailto:sam.schiavone@gmail.com}{sam.schiavone@gmail.com}}
\urladdr{\url{https://math.mit.edu/~sschiavo/}}

\author[Voight]{John Voight}
\address{
  Dartmouth College,
  Department of Mathematics,
  6188 Kemeny Hall, 
  Hanover, NH 03755-3551,
  USA;
  Department of Mathematics and Statistics,
  Carslaw Building (F07),
  University of Sydney, NSW 2006, Australia
}
\email{\href{mailto:jvoight@gmail.com}{jvoight@gmail.com}}
\urladdr{\url{https://jvoight.github.io}}

\subjclass[2020]{12F12 (Primary) 11F80, 11F41, 14G10}
\keywords{inverse Galois theory, Galois representations, Hilbert modular forms, Shimura curves}

\begin{abstract}
We show how Hilbert modular forms can be used in the constructive inverse Galois problem over the rationals.  In particular, we prove that the transitive permutation group \textsf{17T7}, isomorphic to a split extension of $C_2$ by $\PSL_2(\F_{16})$, is a Galois group over the rationals and exhibit an explicit degree $17$ polynomial with this Galois group.  The group arises from the field of definition of the $2$-torsion on an abelian fourfold with real multiplication defined over a real quadratic field; we find such a fourfold attached to a Hilbert modular form.  Building upon work of Demb\'el\'e, we describe a method for reconstructing a period matrix attached to a Hilbert modular form, and we use it to construct the $2$-isogeny polynomial. We also rigorously identify the relevant fourfold as the Jacobian of a genus $4$ Shimura curve and compute explicit equations for this curve.
\end{abstract}

\maketitle

\tableofcontents

\section{Introduction}

\subsection{Motivation and first results}

A question of enduring fascination, the Inverse Galois Problem (IGP) \cite{Serre,MM,JLY} asks whether every finite group is realizable as a Galois group over~$\Q$.
Here we will be interested in the effective IGP, where given a transitive subgroup $G \leq S_d$ (up to conjugation), one asks further for an explicit polynomial $f(x) \in \Q[x]$ of degree $d$ whose Galois group, as a permutation group via the action on the roots, is equivalent to $G$.  

Except for two intransigent groups, the effective IGP has a positive answer for every transitive group $G \leq S_d$ with $d \leq 23$~\cite{Dokchitser,KlunersMalle,KMweb}.  Of these two exceptions, the most notorious is the sporadic simple group~$M_{23}$, the Mathieu group of order~$10\,200\,960$.  Although not realized over $\Q$, the group~$M_{23}$ has been realized as a Galois group over any number field~$K$ where $-1$ is a sum of two squares in $K$~\cite{Granboulan}.

The remaining exception, the one of smallest transitive degree, is the group $G$ with label \href{http://www.lmfdb.org/GaloisGroup/17T7}{\textsf{17T7}} and order $\#G = 8160 = 2^5 3^1 5^1 17^1$.  The group $G$ is isomorphic to $\PSL_2(\F_{16}) \rtimes C_2$ and so fits into a split exact sequence
\begin{equation} 
    1 \to \PSL_2(\F_{16}) \to G \to C_2 \to 1,
\end{equation}
where the nontrivial element $\sigma$ of $C_2$ 
acts entrywise by the element of $\Gal{\F_{16}}{\F_2}$ of order $2$ (i.e., by $a \mapsto a^4$).  
We obtain a permutation representation $G \hookrightarrow S_{17}$ via the natural action on $\PP^1(\F_{16})$, with $\sigma$ again acting entrywise.

We first record a general criterion that can be used to solve 
the inverse Galois problem for groups of the shape $\GL_n(k) \rtimes C_m$ and $\SL_n(k) \rtimes C_m$, where $k$ is a finite field of characteristic $\ell$.  For notation, we refer to \cref{sec:descent}.

\begin{thm}[\Cref{thm:KF0Gal}] \label[theorem]{thm:intromain}
Let $F_0 \subseteq F$ be a Galois extension of number fields.
Let $\rho \colon \Galois_F \to \GL_n(k)$ be a Galois representation, let $G \colonequals \rho(\Galois_{F})$ be the image and suppose $G=\GL_n(k)$ or $G=\SL_n(k)$.  Suppose that there exists an injective group homomorphism $\tau \colon \Gal{F}{F_0} \to \Aut(k)$ such that $\rho^\sigma \simeq \tau_\sigma(\rho)$ for all $\sigma \in \Gal{F}{F_0}$.  If $G=\SL_n(k)$, further suppose that $\gcd([F:F_0],n)$ is a power of $\ell$.

Then the extension $L=F(\rho) \supseteq F_0$ is Galois with $\Gal{L}{F_0} \cong G \rtimes \Gal{F}{F_0}$, where $\Gal{F}{F_0}$ acts on $G \leq \GL_n(k)$ entrywise via the map~$\tau$.
\end{thm}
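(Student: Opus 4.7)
I would split the argument into two parts: establishing that $L/F_0$ is Galois, and then identifying the Galois group as the prescribed semidirect product.

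To show $L/F_0$ is Galois, it suffices to prove that $\ker\rho$ is normal in $\Galois_{F_0}$. Given any $\tilde\sigma \in \Galois_{F_0}$ lying above $\sigma \in \Gal{F}{F_0}$, the conjugate representation $\rho^{\tilde\sigma}$ of $\Galois_F$ defined by $\rho^{\tilde\sigma}(g) = \rho(\tilde\sigma g \tilde\sigma^{-1})$ has kernel $\tilde\sigma^{-1}(\ker\rho)\tilde\sigma$. On the other hand, the hypothesis $\rho^\sigma \simeq \tau_\sigma(\rho)$ combined with the fact that $\tau_\sigma$ is an automorphism of $k$ (so that postcomposition with $\tau_\sigma$ does not change kernels) gives $\ker\rho^{\tilde\sigma} = \ker(\tau_\sigma\circ\rho) = \ker\rho$. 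Hence $\tilde\sigma^{-1}(\ker\rho)\tilde\sigma = \ker\rho$, as required, and $\Gal{L}{F_0}$ is well defined.

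This reduces the problem to identifying the extension
\[
1 \longrightarrow G \longrightarrow \Gal{L}{F_0} \longrightarrow \Gal{F}{F_0} \longrightarrow 1
\]
with the asserted semidirect product. For each $\sigma \in \Gal{F}{F_0}$ I would pick a lift $\tilde\sigma \in \Gal{L}{F_0}$ and then normalize it by multiplying by a suitable element of $G$, arranging that the conjugation action of $\tilde\sigma$ on $G$ is exactly the entrywise $\tau_\sigma$-action. The hypothesis guarantees that this action agrees with $\tau_\sigma$ up to conjugation by some $M_\sigma \in \GL_n(k)$, and the normalization absorbs $M_\sigma$ into the lift. Because composition of entrywise $\tau$-actions is again an entrywise $\tau$-action, the set of normalized lifts is closed under multiplication and inversion, forming a subgroup of $\Gal{L}{F_0}$ that complements $G$; this yields the isomorphism $\Gal{L}{F_0} \cong G \rtimes \Gal{F}{F_0}$ with the action given by $\tau$.

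The main technical point is justifying the normalization step — i.e., that the matrix $M_\sigma$ (defined only up to the centralizer of $G$ in $\GL_n(k)$) can actually be matched by an element of $G$ itself. This amounts to absorbing the centralizer into $G$, which is automatic once $G$ has trivial center, as happens in the intended application with $G \cong \PSL_2(\F_{16})$.
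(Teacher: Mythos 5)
Your argument that $L/F_0$ is Galois is the same as the paper's: both rest on the observation that the isomorphism $\rho^\sigma\simeq\tau_\sigma(\rho)$ forces $\ker\rho^\sigma=\ker(\tau_\sigma\circ\rho)=\ker\rho$, so that conjugation by any lift $\widetilde\sigma$ of $\sigma$ fixes $\ker\rho$.

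For identifying $\Gal{L}{F_0}$ you take a genuinely different route. The paper records that $L^\sigma=L$ for all $\sigma$ (so the Galois closure $N$ of $L$ over $F_0$ is just $L$) and then invokes the Kaloujnine--Krasner embedding into the wreath product $\Gal{L}{F}\wr\Gal{F}{F_0}$, identifying the image with a $\tau$-twisted copy of $G\rtimes\Gal{F}{F_0}$. You instead attack the short exact sequence $1\to G\to\Gal{L}{F_0}\to\Gal{F}{F_0}\to 1$ directly, choosing lifts and normalizing them so that conjugation on $G$ is exactly the entrywise $\tau_\sigma$-action. Your route is more hands-on and has the virtue of surfacing what both arguments must ultimately contend with: the intertwiner $M_\sigma$ in $\rho^\sigma = M_\sigma\,\tau_\sigma(\rho)\,M_\sigma^{-1}$ is only determined up to the centralizer of $\tau_\sigma(G)$, and conjugation by a raw lift acts on $G$ as $g\mapsto M_\sigma\tau_\sigma(g)M_\sigma^{-1}$, not as $\tau_\sigma$ itself.

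The remaining gap is in your final sentence. Normalizing the lift requires that conjugation by $M_\sigma$ induce an \emph{inner} automorphism of $G$, equivalently $M_\sigma\in G\cdot C$ with $C\colonequals C_{\GL_n(k)}(G)$; and triviality of $Z(G)$ does not by itself deliver this. The condition $Z(G)=1$ says only that $G\cap C=1$; it does not say that every element of the normalizer $N_{\GL_n(k)}(G)$ lies in $G\cdot C$, and a centerless $G$ with $\Out(G)\neq 1$ can sit inside $\GL_n(k)$ with $N_{\GL_n(k)}(G)\supsetneq G\cdot C$. What $Z(G)=1$ \emph{does} give you, cleanly, is $H^2(\Gal{F}{F_0},Z(G))=0$, so the abstract extension is determined by its outer action on $G$ and your ``set of normalized lifts'' really is a complement; but that merely shifts the question to whether the outer action agrees with the one induced by $\tau$, which is again precisely the statement $M_\sigma\in G\cdot C$. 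In the intended application this holds for a concrete reason independent of the center: with $G=\SL_2(\F_{16})$ one has $C=\F_{16}^\times I$, and since $\Char \F_{16}=2$ squaring is a bijection on $\F_{16}^\times$, so $G\cdot C=\GL_2(\F_{16})$ and every $M_\sigma$ automatically conjugates $G$ by an inner automorphism. (Relatedly, for the entrywise action of $\tau_\sigma$ on $G$ in the conclusion to make literal sense one needs $\tau_\sigma(G)=G$, which is not formally part of the hypotheses; it too follows once $G\supseteq\SL_n(k)$ with $\det G$ stable under $\Aut(k)$.) So the plan is sound and genuinely different in flavor from the paper's, but the normalization step needs to be justified by the structure of $G$ inside $\GL_n(k)$, not by $Z(G)=1$ alone.
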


Specializing to the case where $\rho$ is the semisimple mod $\frakl$ Galois representation attached to a Hilbert modular form $f$ and applying the Brauer--Nesbitt theorem, we obtain a criterion purely in terms of the Hecke eigenvalues of $f$ (\Cref{mainthm}).
In order to realize \textsf{17T7} as a Galois group, we find a suitable Hilbert modular form and apply the criterion to its associated mod~$2$ Galois representation.

We go beyond an existence proof and construct such an extension explicitly.

\begin{thm} \label[theorem]{thm:17t7main}
The group $G=\textup{\textsf{17T7}}$ is a Galois group over $\Q$.  More precisely, the polynomial
\begin{equation} \label{eqn:fx17}
\begin{aligned}
    x^{17} &- 2 x^{16} + 12 x^{15} - 28 x^{14} + 60 x^{13} - 160 x^{12} + 200 x^{11} - 500 x^{10} + 705 x^{9} - 886 x^{8}\\
    &+ 2024 x^{7} - 604 x^{6} + 2146 x^{5} + 80 x^{4} - 1376 x^{3} - 496 x^{2} - 1013 x - 490 \in \Q[x]
\end{aligned}
\end{equation}
has Galois group $G$.  
\end{thm}

The polynomial \eqref{eqn:fx17} was found by constructing the mod $2$ Galois representation attached to a Hilbert modular form; more precisely, we have the following relation.    

\begin{thm} \label[theorem]{thm:explicitg4}
Let $X_0$ be the smooth projective curve of genus $4$ over $\Q$ birational to the affine curve defined by the equation
\begin{equation} \label{eqn:singmod}
\begin{aligned}
&8 x^4 y + 8 x^3 y^2 + 10 x^2 y^3 + 4 x y^4 + 2 y^5 - 
    8 x^4 + 24 x^3 y + 6 x^2 y^2 + 12 x y^3 \\
    &\qquad - 
    2 y^4 - 2 x^3 - 3 x^2 y + 6 x y^2 - 
    11 y^3 + 14 x^2 - 6 x y - 4 y^2 - 
    7 x + 2 y + 1 = 0.
\end{aligned}
\end{equation}
Then the Jacobian of $X_0$ is isogenous over $F=\Q(\sqrt{3})$ to the abelian variety $A$ attached to the Hilbert modular form \lmfdbform{2.2.12.1-578.1-d}, and $\Q(A[2])$ is the splitting field of the polynomial \eqref{eqn:fx17}.
\end{thm}

In the rest of the paper, we will explain the methods, which rely on the theory of Hilbert modular forms and Oda's conjecture, used to find the polynomials in \eqref{eqn:fx17} and \eqref{eqn:singmod}.

\subsection{Overview} \label[section]{sec:overview} 

There has been substantial work using classical modular forms to solve the IGP for simple groups of the form $\PSL_2(\F_{q})$; see e.g., Zywina~\cite{Zywina} for a recent advance and many references.  In principle, these methods are also effective (computable in deterministic polynomial time), due to work of Edixhoven~\cite[Theorem~14.1.1]{Edixhoven} and others; calculations have been carried out by Bosman~\cite{Bosman}, Mascot~\cite{Mascot}, and again many others.  Although this approach to the IGP admits many variations, a recurring theme is to exhibit Galois groups over~$\Q$ via their action on torsion points of modular abelian varieties over~$\Q$, which appear as quotients of the Jacobian of a modular curve.  

A natural extension of this method works with abelian varieties and modular forms over number fields $F$.  When $F$ is a totally real field, we may work with Hilbert modular forms in a manner analogous to the classical case~\cite{DV}. This gives rise to Galois extensions of~$F$. However, in general the Galois groups over~$\Q$ obtained from the normal closure are wreath products rather than semidirect products.  

The aforementioned explicit criterion (\Cref{mainthm}) allows us to descend from $F$ to~$\Q$, yielding Galois groups that are (subgroups of) semidirect products.  This criterion takes advantage of additional symmetries observed by Gross in work of Demb\'el\'e~\cite{Dembele-peq2} and appearing in work of Demb\'el\'e--Greenberg--Voight \cite[\S\,1]{DGV}; see~\Cref{rmk:galalign}.  We then use this to solve the IGP for \textsf{17T7} by exhibiting certain Hilbert modular forms over abelian totally real fields $F$, see 
\Cref{thm:17T7ineff}.  Moreover, this method applies to many other groups $G$ that are split extensions of finite cyclic groups by $\PGL_2(\F_q)$; see \Cref{rmk:other groups}.

Our final task then is the effective resolution of the IGP for these groups.
In principle, when the Hilbert modular form arises via the Jacquet--Langlands correspondence on a Shimura curve, it should be possible to generalize the work above from the case of classical modular curves.
However, such an approach in practice poses theoretical limitations and substantial computational challenges (some aspects of which we hope to return to in future work).
Instead, what is needed is a version of the Eichler--Shimura construction for Hilbert modular forms suitable for computation, attaching to a Hilbert modular newform $f$ of weight $2$ an abelian variety $A_f$ with matching Galois representation and $L$-function.
In work of Demb\'el\'e~\cite{Dembele}, a numerical approach was outlined in the special case where $F$ is a real quadratic field of narrow class number one and $f$ has rational coefficients, so that $A_f$ is an elliptic curve.
This algorithm computes the period lattice assuming a conjecture of Oda~\cite{Oda} as refined by Darmon--Logan~\cite{DL}; see~\Cref{conj:oda}.  

One of our contributions is to generalize this approach, allowing arbitrary narrow class number and coefficient field.
Given the Hilbert modular newform $f$ over the Galois totally real field $F$, with coefficient field $K_f$ of degree $g=[K_f:\Q]$, the rough outline is as follows.
\begin{enumerate}[1.]
    \item 
      Compute periods for $A_f$ by computing $L(f, 1,\chi)$ for many quadratic characters $\chi$. 
    \item
      Construct the moduli point $z \in (\C\smallsetminus \R)^g$ corresponding to $A_f$ as ratios of the periods, and form the corresponding period matrix $\Pi$.
    \item
      Repeat for the conjugates of $f$ under $\Gal{F}{\Q}$.  
\end{enumerate}
We then form suitable polynomials in the theta constants with characteristic evaluated at $\Pi$ and its conjugates. Moreover, when the period matrix lies in the Schottky locus, we can also seek to reconstruct the abelian variety as the Jacobian of a curve.  
Several new features arise in this generalization, as is perhaps clear from this description.

We carry out this approach to solve the effective IGP for \textsf{17T7}.
Specifically, we choose a Hilbert modular form over $\Q(\sqrt{3})$ with LMFDB label \lmfdbform{2.2.12.1-578.1-d} of level norm $578$.  
In this case, there is a Shimura curve (\Cref{prop:descendtoQQ}) whose Jacobian is isogenous over $F$ to the abelian fourfold $A$ attached to this form, and we give the explicit realization in \Cref{thm:explicitg4}, combining \Cref{prop:modularity} and \Cref{prop:notnicolas}.

\subsection{Structure of the article}

In~\cref{sec:existence}, we dive into our descent approach to the IGP.
We then exhibit our general  method for the Eichler--Shimura construction in~\cref{sec:constructive}.
We conclude in~\cref{sec:17T7} by applying our methods to the particular modular form that produced the \textsf{17T7}-polynomial in~\Cref{thm:17t7main}.

\subsection{Acknowledgements}

The authors thank the organizers, Jennifer Balakrishnan, Bjorn Poonen, and Akshay Venkatesh, of the PCMI 2022 program on Number Theory Informed by Computation.  The idea to use Hilbert modular forms was also noted early by Pip Goodman, and an initial list of candidates was compiled in collaboration with him; we acknowledge and thank him for his contribution.
We also thank Maarten Derickx for his initial contribution to the project, specifically regarding his work determining the necessary properties of Hilbert modular forms used in our method.  
We thank Thomas Bouchet for his help in computing the curve \eqref{eqn:genus4curve}, Lassina Demb\'el\'e for suggesting the argument that proves modularity of this curve (\Cref{prop:modularity}), and Andreas-Stephan Elsenhans for help with a Galois group computation (\Cref{prop:notnicolas}).  
We also thank the inspiring lectures of Tim Dokchitser in the PCMI 2021 Graduate Summer School, Number Theory Informed by Computation, which brought this problem to the authors' attention.
We thank Nicolas Mascot for his computational verification in \Cref{rmk:nicolas} and for his comments on \Cref{prop:notnicolas}.
We also thank Sachi Hashimoto, Adam Logan, and many other participants of the PCMI 2022 program who contributed to the early stages of the project. 
Finally, we thank the anonymous referees for their valuable suggestions leading to several improvements of this article.

Van Bommel, Costa, and Schiavone were supported by a Simons Collaboration grant (550033).
Van Bommel has additionally been supported by C\'eline Maistret’s Royal Society Dorothy Hodgkin Fellowship.
Costa has additionally been supported by a Simons Foundation grant (SFI-MPS-Infra\-structure-00008651, AS).
Elkies was supported by a Simons Collaboration grant (550031).
Keller was supported by the 2021 MSCA Postdoctoral Fellowship 01064790 -- Ex\-pli\-cit\-Rat\-Points. 
Voight was supported by grants from the Simons Foundation: (550029, JV) and (SFI-MPS-Infra\-structure-00008650, JV).

\section{Inverse Galois problem via Hilbert modular forms} \label[section]{sec:existence}

In this section, we describe an approach to the Inverse Galois Problem for groups that are split extensions of finite cyclic groups by $\GL_2(\F_q)$, as well as certain subgroups and quotients, using Hilbert modular forms.  The main result is the criterion in~\Cref{mainthm}.  

\subsection{Group theory setup}

Let $k$ be a finite field of characteristic $\Char k = \ell$ with prime field $k_0 \subseteq k$.  Let $A \leq k^\times$ be a subgroup.  We define
\begin{equation} \label{eqn:GL2kA}
\GL_2(k)_A \colonequals \{g \in \GL_2(k) : \det g \in A\}
\end{equation}
for the subgroup of matrices whose determinant lies in $A$.  We write 
\begin{equation}
\rmP \colon \GL_2(k) \to \PGL_2(k)
\end{equation}
for the canonical projection, and for $G \leq \GL_2(k)$, we write $\rmP\! G \leq \PGL_2(k)$ for the projective image.  The map $u\colon \GL_2(k) \to k$ defined by
\begin{equation}
u(g) \colonequals (\tr g)^2/(\det g) 
\end{equation}
satisfies $u(cg) = u(g)$ for all $c\in k^*$ and $g \in \GL_2(k)$,
and thus descends to a map $\PGL_2(k) \to k$ that we also denote by~$u$.
This map is constant on conjugacy classes, and is surjective because
$u\Big(\begin{pmatrix} 0 & 1 \\ 1 & 0 \end{pmatrix}\Big) = 0$ and 
$u\Big(\begin{pmatrix} v & -v \\ 1 & 0 \end{pmatrix}\Big) = v$ for any $v \in k^*$.

As usual, we write $\SL_2(k)\trianglelefteq \GL_2(k)$ (taking $A=\{1\}$) for the determinant $1$ subgroup and $\PSL_2(k)=\SL_2(k)/\{\pm 1\}$.  When $\ell$ is odd, we have $\PSL_2(k) \leq \PGL_2(k)$ of index $2$;  otherwise (when $\ell=2$) we have $\SL_2(k)=\PSL_2(k)=\PGL_2(k)$.  
Finally, we have $\PGL_2(k)_A = \PSL_2(k)$ if $A \leq (k^{\times})^2$; otherwise $\PGL_2(k)_A=\PGL_2(k)$.

\begin{lemma} \label[lemma]{proj im of Gmax}
We have $\GL_2(k)_A=\begin{pmatrix} A & 0 \\ 0 & 1 \end{pmatrix}\SL_2(k)$.  Moreover, $G \leq \GL_2(k)$ contains $\SL_2(k)$ if and only if $\rmP\! G$ contains $\PSL_2(k)$ if and only if $G=\GL_2(k)_A$ where $A=\det G$.  
\end{lemma}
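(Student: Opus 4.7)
The first assertion $\GL_2(k)_A = \begin{pmatrix} A & 0 \\ 0 & 1 \end{pmatrix}\SL_2(k)$ will follow from the factorization
\[
g = \begin{pmatrix} \det g & 0 \\ 0 & 1 \end{pmatrix}\begin{pmatrix} (\det g)^{-1} & 0 \\ 0 & 1 \end{pmatrix} g,
\]
whose second factor has determinant~$1$; the reverse inclusion is clear from the multiplicativity of~$\det$. For the three-way equivalence, three of the four implications are essentially immediate. First, $G = \GL_2(k)_A \Rightarrow \SL_2(k) \subseteq G$ because $1 \in A$ gives $\SL_2(k) = \GL_2(k)_{\{1\}} \subseteq \GL_2(k)_A$. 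Second, $\SL_2(k) \subseteq G \Rightarrow \PSL_2(k) \subseteq \rmP G$ follows by applying $\rmP$. Third, $\SL_2(k) \subseteq G \Rightarrow G = \GL_2(k)_{\det G}$ holds because $G$ is then a union of cosets of $\SL_2(k)$ in $\GL_2(k)$, and these cosets are parametrized by their common determinant.

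The crux is the remaining implication $\PSL_2(k) \subseteq \rmP G \Rightarrow \SL_2(k) \subseteq G$. My plan is to pass to commutator subgroups. Writing $Z = k^\times \cdot I \trianglelefteq \GL_2(k)$ for the center, the hypothesis is equivalent to $G \cdot Z \supseteq \SL_2(k) \cdot Z$. Since $Z$ is central, $[G Z,\,G Z] = [G, G]$ and $[\SL_2(k)\, Z,\,\SL_2(k)\, Z] = [\SL_2(k), \SL_2(k)]$, so
\[
[G, G] \supseteq [\SL_2(k), \SL_2(k)].
\]
For $|k| \geq 4$ the group $\SL_2(k)$ is perfect (a classical fact), so the right-hand side equals $\SL_2(k)$, and we conclude $\SL_2(k) \subseteq [G, G] \subseteq G$, as desired.

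The main obstacle will be the small cases $|k| \in \{2,3\}$ where perfection fails. The case $|k| = 2$ is trivial, since the center of $\GL_2(\F_2)$ is trivial, $\rmP$ is the identity, and $\PSL_2(\F_2) = \SL_2(\F_2) = \GL_2(\F_2)$. For $|k| = 3$, I would exploit that $Z = \{\pm I\} \subseteq \SL_2(\F_3)$, so the hypothesis becomes $G \cdot Z \supseteq \SL_2(\F_3)$. Define a map $\phi \colon \SL_2(\F_3) \to Z/(G \cap Z)$ sending $s = gz$ (with $g \in G$, $z \in Z$) to $z \bmod (G \cap Z)$; this is well-defined (independent of the factorization) and a homomorphism since $Z$ is central, with kernel $G \cap \SL_2(\F_3)$. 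Its image has order dividing $|Z/(G \cap Z)| \mid |Z| = 2$, but also divides $|\SL_2(\F_3)^{\mathrm{ab}}| = |C_3| = 3$; hence the image is trivial, so $\SL_2(\F_3) = G \cap \SL_2(\F_3) \subseteq G$.
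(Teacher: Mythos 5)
Your proof is correct and takes essentially the same route as the paper's: the same explicit factorization for the first assertion, perfection of $\SL_2(k)$ for $\#k\ge 4$ to get the nontrivial implication $\PSL_2(k)\subseteq\rmP G\Rightarrow\SL_2(k)\subseteq G$, and a coset/cardinality argument for the last equivalence. The one place you diverge is $\#k=3$, which the paper dismisses as ``a direct calculation''; your homomorphism $\phi\colon \SL_2(k)\to Z/(G\cap Z)$ is a cleaner group-theoretic replacement, and in fact it handles \emph{all} $k$ uniformly (making the commutator computation and the $\#k\le 3$ split unnecessary), since the image of $\phi$ is a quotient of $\SL_2(k)^{\mathrm{ab}}$ of order dividing $\#k^\times=\#k-1$, and $\gcd\bigl(\#\SL_2(k)^{\mathrm{ab}},\,\#k-1\bigr)=1$ for every finite field $k$.
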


\begin{proof}
If $g \in \GL_2(k)$ has $\det g = a \in A$, then
$g = \begin{pmatrix} a & 0 \\ 0 & 1 \end{pmatrix} g'$
with the first factor in $\begin{pmatrix} A & 0 \\ 0 & 1 \end{pmatrix}$ and the second in $\SL_2(k)$; this proves the first statement.
For the first equivalence, we check directly for $\#k \leq 3$, so we may suppose that $\#k \geq 4$.  Of course if $G$ contains $\SL_2(k)$ then $\rmP\! G$ contains $\PSL_2(k)$; for the converse, if $\rmP\! G \geq \PSL_2(k)$ then $G \cap \SL_2(k) \leq \SL_2(k)$ has index at most $2$ and since $\SL_2(k)$ is equal to its commutator subgroup it has no subgroup of index $2$, thus $G \cap \SL_2(k)=\SL_2(k)$ and so $G \geq \SL_2(k)$.  
For the second equivalence, the containment $G \leq \GL_2(k)_{A}$ is an equality since $\#G = \#A \#\SL_2(k) = \#\GL_2(k)_{A}$.
\end{proof}

The group $\Gal{k}{\F_{\ell}}$ is cyclic, generated by the Frobenius automorphism $x \mapsto x^\ell$; it acts on $\GL_2(k)$ entrywise, giving an injective homomorphism $\Gal{k}{\F_{\ell}} \hookrightarrow \Aut(\GL_2(k))$, and this action descends to $\PGL_2(k)$.
Similarly, the stabilizer of $A$ (in the sense of mapping $A$ to itself, not necessarily fixing its elements pointwise) in $\Gal{k}{\F_{\ell}}$ acts on $\GL_2(k)_A$ and $\PGL_2(k)_A$.
More generally, if $G \leq \GL_2(k)$ has stabilizer $C \leq \Gal{k}{\F_{\ell}}$,
then the natural projection also gives a well-defined homomorphism 
\begin{equation} \label{eqn:semidirA}
\rmP \colon G \rtimes C \to \rmP\!G \rtimes C
\end{equation}
(as $C$ must also stabilize the scalar subgroup of $G$).

Finally, the natural action of $\PGL_2(k)$ on $\PP^1(k)$ extends to an action by $\PGammaL_2(k) := \PGL_2(k) \rtimes \Gal{k}{\F_{\ell}}$ via 
\begin{equation} \label{eqn:PGammaL}
    \left( \begin{pmatrix} a & b\\ c & d \end{pmatrix}, \sigma \right) \cdot (x : y) = (a \sigma(x) + b \sigma(y) : c \sigma(x) + d \sigma(y));
\end{equation}
the action is faithful, so we obtain an injective homomorphism $\PGammaL_2(k) \hookrightarrow S_{n}$ where $n=\#\PP^1(k)=\#k+1$.  (We similarly obtain a permutation representation of $\GammaL_2(k) := \GL_2(k) \rtimes \Gal{k}{\F_{\ell}}$ on $\A^2(k) \smallsetminus \{(0,0)\}$ of degree $(\#k)^2-1$.)

\begin{example} \label[example]{exm:17T7}
Taking $k=\F_{16}$, we have the subgroup 
\[ \SL_2(\F_{16}) \rtimes \Gal{\F_{16}}{\F_4} \leq \SL_2(\F_{16}) \rtimes \Gal{\F_{16}}{ \F_2} \hookrightarrow \Sym(\PP^1(\F_{16})) \cong S_{17} \] 
via the above permutation representation, which as a subgroup of $S_{17}$ is the group \textsf{17T7} (up to conjugacy).

\end{example}

\subsection{Large image} \label[section]{sec:largimage}

We quickly indicate a few statements that together allow us to conclude that a subgroup $G \leq \GL_2(k)$ contains $\SL_2(k)$.

\begin{prop} \label[prop]{prop:maximag}
Let $G \leq \GL_2(k)$ be a subgroup with $\#k \geq 7$.  Then $G$ contains $\SL_2(k)$ if and only if $G$ contains:
\begin{enumroman}
\item a non-scalar split semisimple element with nonzero trace (its characteristic polynomial splits into distinct linear factors in $k$);
\item a nonsplit semisimple element with nonzero trace (its characteristic polynomial is irreducible);
\item an element whose projective order (i.e., order in $\PGL_2(k)$) exceeds $5$; and
\item an element $g$ such that $k=\F_\ell(u(g))$.
\end{enumroman}
\end{prop}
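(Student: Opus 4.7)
For the forward implication, we would exhibit explicit elements of $\SL_2(k)$ realizing each of (i)--(iv). A diagonal matrix $\diag(\alpha, \alpha^{-1})$ with $\alpha^2 \neq 1$ witnesses~(i); a generator of the nonsplit torus, cyclic of order $\#k + 1 \geq 8$, handles (ii) and (iii) simultaneously; and for~(iv) we pick a matrix with trace $t$ generating $k$ over $\F_\ell$, so that $u = t^2$ also generates $k$ (the Frobenius preserves the subfield lattice).

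For the reverse implication, \Cref{proj im of Gmax} reduces the problem to showing that $H \colonequals \rmP G \leq \PGL_2(k)$ contains $\PSL_2(k)$. The plan is to appeal to Dickson's classification of subgroups of $\PGL_2(k)$: if $H$ does not contain $\PSL_2(k)$, then up to conjugation $H$ sits inside one of
\begin{enumalph}
    \item a Borel subgroup (the stabilizer of a point of $\PP^1(k)$);
    \item the normalizer $N(T)$ of a maximal torus $T$, split or nonsplit;
    \item an exceptional subgroup isomorphic to $A_4$, $S_4$, or $A_5$; or
    \item a subfield subgroup $\PGL_2(k_0)$ with $k_0 \subsetneq k$.
\end{enumalph}
Cases (c) and (d) are quickly dispatched: $A_4$, $S_4$, $A_5$ have exponent at most~$5$, contradicting~(iii); and $u$ is conjugation-invariant with $u(\PGL_2(k_0)) \subseteq k_0$, so $\F_\ell(u(g)) \subseteq k_0 \subsetneq k$ whenever $\rmP g \in \PGL_2(k_0)$, contradicting~(iv). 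Case~(a) follows from~(ii), since elements of a Borel share a common $k$-rational fixed point on $\PP^1$ whereas a nonsplit semisimple element has none.

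The main obstacle is case~(b). The key observation is that any $g \in N(T) \smallsetminus T$ satisfies $g^2 \in Z(\GL_2)$: realising $T = T_s$ as the diagonal torus, every element of $N(T_s) \smallsetminus T_s$ is anti-diagonal with $g^2$ a scalar matrix, and the nonsplit case is analogous. Thus every element of $N(T) \smallsetminus T$ has projective order exactly~$2$, so the element of projective order exceeding~$5$ from~(iii) must lie in~$T$. In the split-torus subcase, combining this with~(ii) forces a nonsplit semisimple element into $N(T_s) \smallsetminus T_s$; in the characteristic $\ell = 2$ relevant to our main application, this is immediately absurd because lifts of such involutions have characteristic polynomial $x^2 - ab = (x+\sqrt{ab})^2$, a perfect square, hence the element is unipotent rather than semisimple. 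The nonsplit-torus subcase is symmetric, with~(i) yielding a split semisimple element in $N(T_{ns}) \smallsetminus T_{ns}$, again unipotent when $\ell = 2$. In residual characteristic $\ell > 2$ a more delicate argument is required, invoking~(iv) to control the $u$-values on the resulting dihedral subgroup of $N(T)$.
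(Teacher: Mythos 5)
Your approach is the same as the paper's — reduce to the projective image via \Cref{proj im of Gmax}, then run through Dickson's classification — and your handling of the Borel, exceptional, and subfield cases (using (ii), (iii), (iv) respectively) agrees with the paper's. In the dihedral case you are in fact more careful than the paper: you give a complete, correct argument when $\ell = 2$ (lifts of elements of $N(T) \smallsetminus T$ are scalar multiples of unipotents, so $N(T)$ cannot contain both a split and a nonsplit semisimple element, and one of (i), (ii) must fail), whereas the paper disposes of both the affine and dihedral maximal subgroups with a bare appeal to ``(i) and (ii)''.

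Your hesitation about odd $\ell$ is entirely warranted, and the suggestion that (iv) rescues that case is not correct: the proposition as stated is \emph{false} in odd characteristic. Take $k = \F_7$ and $G = N(T_s) \leq \GL_2(\F_7)$ the group of monomial matrices, of order $72$. Then $\diag(1,2)$ is split semisimple, giving (i); $\left(\begin{smallmatrix}0&1\\3&0\end{smallmatrix}\right)$ has irreducible characteristic polynomial $x^2 - 3$ because $3$ is a nonresidue mod $7$, giving (ii); $\diag(1,3)$ has projective order $6 > 5$, giving (iii); and (iv) is vacuous since $k=\F_\ell$ — yet $\#G = 72 < 336 = \#\SL_2(\F_7)$, so $G \not\supseteq \SL_2(\F_7)$. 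Using $\diag(1,\gamma)$ for $\gamma$ a generator of $k^\times$ (so that $u(\diag(1,\gamma)) = \gamma + 2 + \gamma^{-1}$ generates $k$ over $\F_\ell$) gives the same conclusion for every odd $\#k \geq 7$ with (iv) nonvacuous. The root cause is exactly what you noticed: for odd $\ell$ an anti-diagonal matrix with nonsquare determinant is nonsplit semisimple, so (i) and (ii) do \emph{not} distinguish $N(T_s)$ from a projective subgroup, and the paper's one-line dismissal of the dihedral case is an error rather than a compressed argument you were expected to expand. In the paper's only application $k = \F_{16}$ has characteristic $2$, where your argument is complete; but as written, the proposition needs either the added hypothesis $\ell = 2$ or a further condition, and the proof of the Trace Lemma that follows (whose conclusion is true and classical, since $\#\tr\bigl(N(T)\cap\SL_2(k)\bigr) < \#k$ directly) inherits this defect by routing through the present proposition.
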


\begin{proof}
The implication ($\Rightarrow$) is direct, so we prove ($\Leftarrow$).  By \Cref{proj im of Gmax}, it is enough to show that $\rmP\!G \geq \PSL_2(k)$.
By Dickson's classification (see e.g.,\ King \cite[Corollaries 2.2--2.3]{King}), the maximal subgroups of $\PGL_2(k)$ are affine, dihedral, exceptional (isomorphic to $S_4$, $A_4$, or $A_5$), or projective (i.e., up to conjugacy, we have $\rmP\! G =\PSL_2(k_0)$ or $\rmP\! G = \PGL_2(k_0)$ for some subfield $k_0 \subseteq k$).  
The stabilizer of a point and the stabilizer of a pair of points are ruled out by (ii), the stabilizer of a pair of imaginary points is ruled out by (i), and the exceptional groups are rules out by (iii).




It follows that $G$ is projective.  But then $u(g) \in k_0$ for all $g \in G$.  By (iv), we conclude $k=k_0$, so $G \geq \PSL_2(k)$.
\end{proof}

We may also work just with traces, as follows.

\begin{prop}[Trace lemma] \label[proposition]{trace lemma} 
Let $G \leq \SL_2(k)$ with $\#k \geq 4$ and $\#k \neq 5$.  Then $G=\SL_2(k)$ if and only if $\tr G = k$.
\end{prop}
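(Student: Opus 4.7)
The forward implication is immediate: for any $t \in k$, the matrix $\begin{pmatrix} t & -1 \\ 1 & 0 \end{pmatrix}$ lies in $\SL_2(k)$ and has trace $t$.

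For the converse, suppose $\tr G = k$. My plan is to apply \Cref{prop:maximag} to the central enlargement $G' := G \cdot \{\pm I\}$, and then to conclude $G = G'$ via the perfection of $\SL_2(k)$. Since $\tr(-g) = -\tr g$, we still have $\tr G' = k$; viewing $G' \leq \GL_2(k)$ with $\det G' = \{1\}$, it then suffices to verify conditions (i)--(iv) of \Cref{prop:maximag}, which would give $G' \supseteq \SL_2(k)$ and hence $G' = \SL_2(k)$. Once this is established, I conclude $G = G'$ as follows: otherwise $G$ would have index $2$ in $\SL_2(k) = G \cdot \{\pm I\}$, yielding a surjection $\SL_2(k) \twoheadrightarrow C_2$, contradicting $\SL_2(k) = [\SL_2(k),\SL_2(k)]$ for $\#k \geq 4$.

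For the application of \Cref{prop:maximag} in the main case $\#k \geq 7$, conditions (iv), (i), and (ii) are accessible directly from $\tr G' = k$. The characteristic polynomial of any $g \in \SL_2(k)$ is $X^2 - (\tr g) X + 1$, so the trace alone determines whether $g$ is split semisimple, nonsplit semisimple, or has a repeated eigenvalue. An elementary counting argument (via the hyperbola $y^2 = t^2 - 4$ in odd characteristic, together with an Artin--Schreier condition in characteristic $2$) shows that each of these types of traces is realized for $\#k \geq 7$, handling (i) and (ii). Similarly, the set $\{t \in k : \F_\ell(t^2) = k\}$ is nonempty by a counting argument over proper subfields of $k$ (trivial in characteristic $2$, since Frobenius is bijective), which handles (iv) since $u(g) = (\tr g)^2$ on $\SL_2(k)$.

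The hard part will be condition (iii), producing an element of projective order $> 5$. Trace does determine the order of a semisimple element of $\SL_2(k)$, so this reduces to showing that the collection of all traces in $k$ forces some element of sufficiently large projective order. For $\#k \geq 11$ this is easy: generic traces correspond to eigenvalues of order $> 10$ in a split or nonsplit torus. For the intermediate values $\#k \in \{7,8,9\}$, the argument is more delicate, and I would proceed by bounding the trace set of each type of maximal subgroup of $\SL_2(k)$ (affine, torus-normalizer, exceptional, subfield) and verifying that none of them can account for every $t \in k$. The remaining case $\#k = 4$, not covered by \Cref{prop:maximag}, I would handle separately via the isomorphism $\SL_2(\F_4) \cong A_5$ and a brief inspection of proper subgroups (for instance, $A_4 \leq A_5$ has no element of order $5$, so its trace set misses the two elements $\omega, \omega^2 \in \F_4 \setminus \F_2$).
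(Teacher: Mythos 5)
Your overall plan — deduce the converse by verifying the four hypotheses of \Cref{prop:maximag} and handle $\#k=4$ separately — is the same as the paper's, and your treatment of the forward direction and of conditions (i), (ii), and (iv) is in the right spirit (the paper uses exactly the hyperbola/fiber count for (i)--(ii), and the ``Frobenius is an automorphism'' observation for (iv) in characteristic $2$, with a generator of $k^\times$ and a cardinality bound in odd characteristic).

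Two remarks, one cosmetic and one substantive. First, the detour through $G' = G\cdot\{\pm I\}$ is unnecessary: \Cref{prop:maximag} applies to any $G\le\GL_2(k)$, so you may apply it directly to $G\le\SL_2(k)\le\GL_2(k)$; its conclusion $G\supseteq\SL_2(k)$ together with $G\le\SL_2(k)$ already gives equality, and the perfection-of-$\SL_2$ argument never needs to be invoked. Second, and more importantly, condition (iii) is left genuinely incomplete. You flag it as ``the hard part,'' and for $\#k\in\{7,8,9\}$ you only outline a plan (``bound the trace set of each type of maximal subgroup'') without carrying it out; that plan is also a change of strategy, since it amounts to reproving Dickson's classification case-by-case rather than feeding a single well-chosen element into \Cref{prop:maximag}. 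The paper's argument for (iii) is much more direct and uniform: if $g\in\SL_2(k)$ has projective order $\le 5$, then the eigenvalue $\lambda$ of $g$ satisfies $\lambda^2$ is a root of unity of order $\le 5$, which forces $\tr g = \lambda+\lambda^{-1}$ to lie in the explicit set $\{0,\pm 1,\pm 2\}\cup\{a: a^2\pm a-1=0\}$ of size at most $7$. For $\#k\ge 8$ (and, with the extra remark that $a^2\pm a-1=0$ has no root in $\F_7$, for $\#k=7$ as well) there is therefore a trace value outside this set, and any $g\in G$ realizing it has projective order $>5$. This disposes of (iii) in one stroke with no case analysis on $\#k$; you should replace your sketch with this argument, or at minimum actually carry out the maximal-subgroup analysis you propose. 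Your $\#k=4$ argument via $\SL_2(\F_4)\cong A_5$ is fine (the paper just says ``direct calculation'').
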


\begin{proof}
For $\#k = 4$, we verify the claim with a direct calculation. So we may suppose that $\#k \geq 7$, and apply \Cref{prop:maximag}.
\begin{enumroman}
\item Let $\lambda \in k^\times \smallsetminus \{\pm 1\}$.  Then by hypothesis there exists $g \in G$ such that $\tr(g)=\lambda+\lambda^{-1}$, whence its characteristic polynomial is $x^2-(\lambda+\lambda^{-1})x+1=(x-\lambda)(x-\lambda^{-1})$, so $g$ is split semisimple.  
\item There exists $a \in k^\times$ such that $x^2-ax+1 \in k[x]$ is irreducible, since the map $k^\times \smallsetminus \{\pm 1\} \to k^\times$ given by $\lambda \mapsto \lambda+\lambda^{-1}$ is not surjective: it has fibers of cardinality $2$ and hence image of cardinality at most $(\#k-3)/2 < \#k - 2$.  Any $g \in G$ with $\tr(g)=a$ is therefore nonsplit semisimple.
\item An element $g$ of projective order $\leq 5$ has $\tr g = a \in \{\pm 2,\pm 1,0\}$ or $a^2 \pm a-1=0 \in k$. %
This removes at most $7$ elements from $k$, and when $k=\F_7$ there is no $a \in k$ with $a^2 \pm a-1=0$.  Any $g \in G$ with trace among the remaining elements of~$k$ satisfies (iii).  
\item Let $a \in k^\times$ generate $k^\times$ as an abelian group.  We claim that 
$\F_\ell(a^2)=\F_\ell(a)$.  Indeed, if $\ell=2$ then $\F_\ell(a^2)=\F_\ell(a)$ (as squaring is a Galois automorphism).  If instead $\ell$ is odd, then $\langle a^2 \rangle \leq \F_\ell(a^2)$, so $\#\F_\ell(a^2) \geq (\#k-1)/2 > \#k/\ell \geq \#k_0$ for all subfields $k_0 \subsetneq k$.  Using the claim, any $g \in G$ with $\tr g=a$ will suffice, since then $u(g)=(\tr g)^2=a^2$.
\end{enumroman}
This completes the proof because the conditions in \Cref{prop:maximag} are all satisfied.
\end{proof}

\begin{remark}
\Cref{trace lemma} is false for $\#k=2,3,5$ by the counterexamples $C_3 \leq \SL_2(\F_2)$, $Q_8 \leq \SL_2(\F_3)$, and $\SL_2(\F_3) \hookrightarrow \SL_2(\F_5)$.  However, we can consider an upgraded statement asking for subgroups of $\GL_2(\F_p)$ such the set of characteristic polynomials of elements of $G$ coincides with that of $\GL_2(\F_p)$.  Unfortunately, there is again a counterexample for $p=3$, namely $Q_8 \rtimes C_2 \hookrightarrow \GL_2(\F_3)$; but the result now holds for $\GL_2(\F_5)$ again by a direct calculation.  
\end{remark}

\subsection{Descent} \label[section]{sec:descent}

Now let $F \supseteq F_0$ be a finite Galois extension of number fields inside an algebraic closure $\Qbar$, with absolute Galois group $\Galois_F \colonequals \Gal{\Qbar}{F}$.  Let $k$ be a finite field and let
\begin{equation}
    \rho \colon \Galois_F \to \GL_n(k) 
\end{equation}
be a Galois representation.  Then $\ker \rho$ cuts out a Galois extension $L\colonequals F(\rho) \supseteq F$ with Galois group $\Gal{L}{F}=G \colonequals \img \rho \leq \GL_n(k)$.  Let $S(\rho)$ be the set of (nonzero) primes $\frakp$ of the ring of integers $\Z_F \subseteq F$ above any prime number $p$ that ramifies in $L$.  

The group $\Aut(k)$ acts on $\GL_n(k)$ entrywise, so for $\tau \in \Aut(k)$ we obtain another Galois representation
\begin{equation}
\begin{aligned}
    \tau(\rho) \colon \Galois_F &\to \GL_n(k) \\
    \tau(\rho)(\xi) &= \tau(\rho(\xi)).
\end{aligned}
\end{equation}

There is a second Galois action coming from $G_0 \colonequals \Gal{F}{F_0}$, defined as follows.  Let $\sigma \in \Gal{F}{F_0}$.  Choose a lift $\widetilde{\sigma} \in \Galois_{F_0}$.  We then obtain a new Galois representation defined by
\begin{equation}
\begin{aligned}
    \rho^\sigma \colon \Galois_F &\to \GL_n(k) \\
    \rho^\sigma(\xi) &= \rho(\widetilde{\sigma}^{-1} \xi \widetilde{\sigma})
\end{aligned}
\end{equation}
which is well-defined up to isomorphism, independent of the choice of lift.  In particular, if $\frakp \not \in S(\rho)$ is a prime of $\Z_F$ and $\Frob_\frakp \in \Galois_F$ a Frobenius automorphism at $\frakp$, then 
\begin{equation} 
    \det(1-\rho^\sigma(\Frob_\frakp)T) = \det(1-\rho(\Frob_{\sigma(\frakp)})T) \in k[T],
\end{equation}
well-defined up to conjugacy.  

We organize these according to $\sigma$ as follows.  We say that $\rho$ \defi{descends} to $F_0$ via a group homomorphism 
\begin{equation}
\begin{aligned}
    \tau \colon \Gal{F}{F_0} &\to \Aut(k) \\
    \sigma &\mapsto \tau_\sigma
\end{aligned}
\end{equation}
if there exists an isomorphism $\rho^\sigma \simeq \tau_\sigma(\rho)$ of representations for all $\sigma \in \Gal{F}{F_0}$.  The name is justified by the following theorem.  

\begin{thm}[Galois descent law] \label[theorem]{thm:KF0Gal}
Suppose that $\rho$ descends via $\tau$.  Then the extension $L=F(\rho) \supseteq F_0$ is Galois, fitting in an exact sequence
\begin{equation} \label{eqn:gallff0}
1 \to \Gal{L}{F} \to \Gal{L}{F_0} \to \Gal{F}{F_0} \to 1.   
\end{equation}

Let $G \colonequals \img \rho$, and suppose further that $\tau$ is injective and one of the following holds:
\begin{enumroman}
\item $G=\GL_n(k)$; or 
\item $G=\SL_n(k)$ with $\gcd([F : F_0],n)$ equal to a power of $\ell$ (including 1).  
\end{enumroman}
Then \eqref{eqn:gallff0} splits and $\Gal{L}{F_0} \simeq G \rtimes \Gal{F}{F_0}$, where $\Gal{F}{F_0}$ acts on $G \leq \GL_n(k)$ entrywise via the map $\tau$.
\end{thm}

To begin with the proof of \Cref{thm:KF0Gal}, we record a proof of the first statement.  For all $\sigma \in \Gal{F}{F_0}$, the given isomorphism $\rho^\sigma \simeq \tau_\sigma(\rho)$ in particular implies that $\ker \rho^\sigma = \ker \tau_\sigma(\rho) = \ker \rho$, i.e., $L^\sigma=L$.  But $\ker \rho^\sigma = \widetilde{\sigma} (\ker \rho) \widetilde{\sigma}^{-1}$, so we conclude that $L=N$ is normal over $F_0$.  The exact sequence \eqref{eqn:gallff0} then follows by restriction.  

To go further, we recall the following general setup.

\begin{prop} \label[prop]{prop:oof}
Let $G_0$ and $G$ be groups with $G_0$ finite.  Let
\begin{equation} \label{eqn:extension}
1\to G\to E\to G_0\to 1
\end{equation}
be a short exact sequence.  Then there is an outer action
\begin{equation}
\bar\alpha\colon G_0\to \operatorname{Out}(G).
\end{equation}

Suppose $\bar\alpha$ lifts to
\begin{equation}
\alpha\colon G_0\to \Aut(G).
\end{equation}
Then the extension \eqref{eqn:extension} determines a class
\begin{equation} \label{eqn:ccE}
[c_E]\in H^2(G_0,Z(G)),
\end{equation}
where $G_0$ acts on $Z(G)$ through $\alpha$.  The extension splits, giving $E \simeq G \rtimes_\alpha G_0$, if and only if $[c_E]=1$; in particular, this holds if $Z(G)=\{1\}$.
\end{prop}

\begin{proof}
See Brown \cite[Ch.~IV, \S6]{Brown1982} or Rotman \cite[Ch 7]{Rotman1995}.  
\end{proof}

Referring to \Cref{prop:oof}, we now consider the situation of \Cref{thm:KF0Gal} with $G_0 \colonequals \Gal{F}{F_0}$ and $E \colonequals \Gal{L}{F_0}$.  

\begin{lemma} \label[lemma]{lem:rhotau}
If $\rho$ descends via $\tau$ and $\tau_\sigma(\rho)(\Galois_F) = \rho(\Galois_F)=G$ for all $\sigma \in G_0$, then $\tau \colon G_0 \to \Aut(k)$ lifts to a homomorphism $G_0 \to \Aut(G)$; in particular, this holds if $G=\GL_n(k)$ or $G=\SL_n(k)$.  
\end{lemma}

\begin{proof}
If $\tau_\sigma(\rho)(\Galois_F) = \rho(\Galois_F)=G$, the map $\tau_\sigma \in \Aut(k)$ acting entrywise on $\GL_n(k)$ lifts to an automorphism of $G$ and these combine to a homomorphism $G_0 \to \Aut(G)$.  

If $G=\GL_n(k)$ then equality of images is clear; if $G=\SL_n(k)$, then from $\rho^{\sigma}(\Galois_F)$ being \emph{conjugate} to $\tau_{\sigma}(\rho)$ in $\GL_n(k)$ we may again conclude equality.
\end{proof}

For the second part of \Cref{prop:oof}, when $n=p$ for $G=\SL_n(k)$ already we have $Z(G)=\{1\}$.  

\begin{lemma} \label[lemma]{lem:autg0}
Let $G_0 \leq \Aut(k)$.  Then the following statements hold.
\begin{enumalph}
\item $H^2(G_0,k^\times)=\{1\}$ under the natural action.  
\item Suppose $\gcd(\#G_0,\#\mu_n(k))=1$.  Then $H^2(G_0,\mu_n(k))=1$.  
\end{enumalph}
\end{lemma}

\begin{proof}
Let $k_0 \colonequals k^{G_0}$ be the fixed field, so $G_0 \colonequals \Gal{k}{k_0}$.  Then (using $2$-periodicity of Tate cohomology of finite cyclic groups)
\begin{equation}
H^2(G_0,k^\times)=(k^{\times})^{G_0}/\Nm_{k|k_0}(k^\times) = \{1\}
\end{equation}
handling~(a).  Part~(b) follows immediately from a restriction/corestriction argument.  
\end{proof}

\begin{proof}[Proof of \Cref{thm:KF0Gal}]
We proved the first part right below the statement; for the rest, combine \Cref{prop:oof} with \Cref{lem:rhotau} and \Cref{lem:autg0}, noting that $Z(\SL_n(k))=\mu_n(k)$ and $\#\mu_n(k) \mid n'$ where $n=p^en'$ with $p \nmid n'$.  
\end{proof}


\begin{remark}
The converse of~\Cref{thm:KF0Gal} may not be true, since the condition of being Galois concerns only abstract Galois groups, which may or may not be equivalent as linear representations.  

Moreover, the extra conditions on the group $G$ are necessary for the exact sequence to split.  Indeed, if $F_0 = \Q$, $F = \Q(\sqrt{5})$, and $\rho \colon \mathrm{Gal}_F \to \GL_1(\F_3)$ is the quadratic character factoring through $\Gal{\Q(\zeta_5)}{F}$, then the descent criterion is easily seen to be verified, but the exact sequence does not split.
\end{remark}

\begin{corollary} \label[corollary]{cor:KF0Gal}
Suppose that $\rho$ is semisimple.  If $\tau \colon \Gal{F}{F_0} \to \Aut(k)$ is an injective group homomorphism such that 
\begin{equation} \label{eqn:yupyup}
\det(1-\rho(\Frob_{\sigma(\frakp)})T) = \det(1-\tau_\sigma(\rho)(\Frob_\frakp)T) \in k[T]
\end{equation}
for all primes $\frakp \not\in S(\rho)$, then the extension $L=F(\rho) \supseteq F_0$ is Galois and its Galois group fits in an exact sequence \eqref{eqn:gallff0}, and the Galois group is isomorphic $\Gal{L}{F_0} \simeq G \rtimes \Gal{F}{F_0}$ if $G$ satisfies one of the additional conditions in~\Cref{thm:KF0Gal}.
\end{corollary}

\begin{proof}
Since $\rho$ is semisimple, we just combine~\Cref{thm:KF0Gal} with the Brauer--Nesbitt theorem~\cite[Theorem~30.16]{CurtisReiner} and the Chebotarev density theorem.
\end{proof}

\subsection{Hilbert descent}

We now apply the Galois descent law (\Cref{thm:KF0Gal}) to the situation of a Galois representation attached to a Hilbert modular form, our case of interest.  (The results could also just as easily specialize to any setting where we can attach Galois representations to modular forms.)

Let $F$ be a Galois totally real field of degree $n=[F:\Q]$, and let $f$ be a Hilbert newform over $F$ with level $\frakN$ and paritious weight $(k_i)_i$ with $k_i \geq 2$ for all $i=1,\dots,n$ and Nebentypus character $\chi$.  Let $k_0 \colonequals \max(k_1,\dots,k_n)$.  For $\frakp \nmid \frakN$, let $a_\frakp(f)$ be the Hecke eigenvalue of $f$ at~$\frakp$, and let $K_f \colonequals \Q(\{a_{\frakp}(f)\}_{\frakp})$ be the number field generated by its Hecke eigenvalues (which are themselves algebraic integers in $K_f$).  Let $\frakl$ be a prime of $\Z_{K_f}$ with residue field $\F_\frakl$ and characteristic $\Char \F_\frakl=\ell$.  

\begin{thm} \label[theorem]{thm:HilbertGalrep}
There exists an irreducible Galois reprentation 
\[ \rho_{f,\frakl^\infty} \colon \Galois_F \to \GL_2(K_{f,\frakl}) \]
such that
\begin{equation} \label{eqn:trdetchi}
\begin{aligned}
    \tr(\rho_{f, \frakl^\infty}(\Frob_\frakp)) &= a_\frakp(f) \\
    \det(\rho_{f, \frakl}(\Frob_\frakp)) &= \chi(\frakp)\Nm(\frakp)^{k_0-1} 
\end{aligned}
\end{equation}
for all (nonzero) prime ideals $\frakp$ of $\Z_F$ with $\frakp \nmid \ell\frakN$.
\end{thm}

\begin{proof}
Combine work of Carayol~\cite{Carayol}, Taylor~\cite{Taylor}, and Blasius--Rogawski~\cite{BlasiusRogawski1989}.
\end{proof}

As usual, choosing an integral lattice, reducing modulo $\frakl$, and taking the semisimplification, we obtain a representation
\begin{equation} \label{eqn:redmodell}
\rho^{\textup{ss}}_{f,\frakl} \colon \Galois_F \to \GL_2(\F_\frakl)
\end{equation}
where now \eqref{eqn:trdetchi} holds as congruences modulo $\frakl$.  To simplify notation, we drop the superscript and write just $\rho_{f,\frakl}$.  

In particular, the image of $\rho_{f,\frakl}$ lies in the subgroup $\GL_2(\F_\frakl)_A$ (defined in \eqref{eqn:GL2kA}) where $A \leq \F_\frakl^\times$ is the subgroup generated by $\F_\ell^\times$ and the values of $\chi$ modulo $\frakl$.  %

Let $D_{\frakl} \colonequals \{\sigma \in \Aut(K_f) : \sigma(\frakl) = \frakl\}$ and $I_{\frakl} \colonequals \{\sigma \in D_{\frakl} : \sigma(a) \equiv a \pmod{\frakl} \textup{ for all $a \in K_f$}\}$. (If $K_f$ is Galois, these are the decomposition and inertia groups.)  

\begin{theorem}\label[theorem]{mainthm}
Suppose there is an injective group homomorphism 
\[ \tau \colon \Gal{F}{\Q} \hookrightarrow D_{\frakl}/I_{\frakl} \] 
such that for every prime $\frakp \nmid \ell\frakN$ and for every $\sigma \in \Gal{F}{\Q}$, we have both
\begin{equation} \label{eq:eta sigma on a frakp}
\begin{aligned}
    \tau_\sigma(a_\frakp(f)) &\equiv a_{\sigma(\frakp)}(f) \pmod{\frakl}, \\
    \tau_\sigma(\chi(\frakp)) &\equiv \chi(\sigma(\frakp)) \pmod{\frakl}.
    \end{aligned}
\end{equation}
Then the field $L=F(\rho_{f,\frakl})$ is Galois over $\Q$ with the Galois group fitting in an exact sequence as in~\eqref{eqn:gallff0}.
If additionally $G \colonequals \img \rho_{f,\frakl}$ satisfies the additional conditions in~\Cref{thm:KF0Gal}, then $\Gal{L}{\Q}$ is isomorphic to $G \rtimes \Gal{F}{\Q}$ with $\Gal{F}{\Q}$ acting through $\tau$ on $G \subset \GL_2(\F_{\frakl})$ coefficientwise.
\end{theorem}

\begin{proof}
We apply the form of the Galois descent law (\Cref{thm:KF0Gal}) given in~\Cref{cor:KF0Gal}.  
\end{proof}

\begin{remark} \label[remark]{rmk:only for cyclic}
Since the group $D_\frakl/I_\frakl \hookrightarrow \Gal{\F_\frakl}{\F_\ell}$ is cyclic, \Cref{mainthm} applies only when $F$ is cyclic over $\Q$.  It of course also admits a generalization to the situation where $F \supseteq F_0$ is a cyclic extension of totally real fields, giving a descent to $F_0$ instead of $\Q$.
\end{remark}

As a corollary, we also descend the projective representation.  

\begin{corollary} \label[corollary]{cor:maincor}
Under the hypotheses of \Cref{mainthm} and the additional conditions of \Cref{thm:KF0Gal}, the field $F(\rmP\!\rho_{f,\frakl})$ cut out by the projective representation $\rmP\!\rho_{f,\frakl}$ is Galois over $\Q$ with Galois group $\rmP\!G \rtimes \Gal{F}{\Q}$.
\end{corollary}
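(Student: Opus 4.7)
The plan is to deduce the corollary from \Cref{mainthm} by passing to the quotient by the scalar subgroup, exploiting the compatibility \eqref{eqn:semidirA} between the projection $\rmP$ and the semidirect product structure.

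First, I would identify $F(\rmP\!\rho_{f,\frakl})$ inside $L \colonequals F(\rho_{f,\frakl})$ via the Galois correspondence. Since $\rmP\!\rho_{f,\frakl} = \rmP \circ \rho_{f,\frakl}$, its kernel inside $\Galois_F$ equals $\rho_{f,\frakl}^{-1}(Z)$, where $Z \leq \GL_2(\F_\frakl)$ denotes the subgroup of scalar matrices. Under the isomorphism $\Gal{L}{F} \cong G$ supplied by \Cref{mainthm}, the subfield $F(\rmP\!\rho_{f,\frakl}) \subseteq L$ therefore corresponds to the subgroup $H \colonequals Z \cap G$, and the quotient $G/H$ is by definition $\rmP\!G$.

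Next, I would verify that $H$ is normal in the full group $\Gal{L}{\Q} \cong G \rtimes \Gal{F}{\Q}$, not merely in $G$. Normality in $G$ is automatic because $H$ is central in $\GL_2(\F_\frakl)$ and hence in $G$. For the outer action of $\Gal{F}{\Q}$, recall that via $\tau$ this action is the restriction of the entrywise $\Aut(\F_\frakl)$-action on $\GL_2(\F_\frakl)$; any such coefficient-wise field automorphism sends scalar matrices to scalar matrices, so it preserves $Z$ and hence preserves $H = Z \cap G$. Thus $H \trianglelefteq G \rtimes \Gal{F}{\Q}$, and by the Galois correspondence $F(\rmP\!\rho_{f,\frakl})$ is Galois over $\Q$ with
\[
\Gal{F(\rmP\!\rho_{f,\frakl})}{\Q} \cong (G \rtimes \Gal{F}{\Q})/H \cong (G/H) \rtimes \Gal{F}{\Q} = \rmP\!G \rtimes \Gal{F}{\Q},
\]
where the middle identification is precisely the surjection \eqref{eqn:semidirA}.

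The argument is essentially formal given \Cref{mainthm}; there is no serious obstacle beyond the mild bookkeeping that the induced $\Gal{F}{\Q}$-action on $G/H$ is well-defined, which is exactly the well-definedness observation preceding \eqref{eqn:semidirA}.
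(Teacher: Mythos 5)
Your argument is correct and is essentially the detailed unpacking of the paper's one-line proof, which simply cites the well-definedness of the projection $\rmP$ on the semidirect product as in \eqref{eqn:semidirA}. You make explicit the Galois-theoretic bookkeeping (identifying the fixed field of $H = Z \cap G$ and checking $H$ is normal in $G \rtimes \Gal{F}{\Q}$), but the underlying idea is the same.
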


\begin{proof}
The projection is well-defined on the semidirect product as in \eqref{eqn:semidirA}.
\end{proof}

\begin{remark} \label[remark]{ref:other17Tx}
The transitive groups \textsf{17T6} and \textsf{17T8} are closely related to \textsf{17T7}: \textsf{17T6} is isomorphic to $\PSL_2(\F_{16})$ and \textsf{17T8} is isomorphic to
\begin{equation*}
    \PSL_2(\F_{16}) \rtimes \Gal{\F_{16}}{\F_2} \simeq \PSL_2(\F_{16}) \rtimes C_4 \, .
\end{equation*}
As explicit polynomials for \textsf{17T6}- \cite{Bosman} and \textsf{17T8}-extensions \cite{JonesRoberts} were already known, one might wonder why the case of \textsf{17T7} remained open.  

The construction used by Bosman in \cite{Bosman} is in fact similar to our construction for \textsf{17T7}, except that it uses classical modular forms instead of Hilbert modular forms. In the classical case, one can use modular symbols to reconstruct the abelian variety instead of Oda's conjecture. 

Jones--Roberts \cite[\S\,13]{JonesRoberts} exhibit two infinite families of \textsf{17T8}-extensions, both arising from Belyi maps defined over $\Q$. Each of these families produces a tower of extensions $L \supseteq K \supseteq \Q(t)$ such that
\begin{equation*} 
\Gal{L}{K} \simeq \PSL_2(\F_{16}) \quad \text{and} \quad \Gal{K}{\Q(t)} \simeq C_4 \, .
\end{equation*}
However, as they remark, for each of these families there is a constant field extension contained in $K$ (containing $\Q(\sqrt{5})$ in each case). This prevents us from obtaining a \textsf{17T7}-extension by specialization of either of these \textsf{17T8} families.
\end{remark}

\begin{remark} \label[remark]{rmk:galalign}
In work of Demb\'el\'e~\cite{Dembele-peq2} and Demb\'el\'e--Greenberg--Voight \cite{DGV}, non-solvable Galois extensions of $\Q$ unramified outside $p = 2,3,5$ were found using Hilbert modular forms over abelian extensions of $\Q$, as above.  Gross explained why in many cases the Galois groups over $\Q$ were semidirect products \cite[\S 1]{DGV}; this observation is encoded in the descent law above.  

See also work of Cunningham--Demb\'el\'e \cite{CunninghamDembele} and Booker--Sijsling--Sutherland--Voight--Yasaki \cite{BSSVY}, who also study the same situation and relate this to abelian varieties of potential $\GL_2$-type. 
\end{remark}

\subsection{Application to the IGP}

We may now put the pieces from the previous subsections together to obtain our application to the Inverse Galois Problem (IGP): we look for Hilbert modular forms $f$ where the mod $\frakl$ image is large (using \cref{sec:largimage}) and $f$ satisfies the descent condition \eqref{eq:eta sigma on a frakp}.

We first focus on the proof of the (ineffective) version of \Cref{thm:17t7main}, realizing \textsf{17T7}, given as in \Cref{exm:17T7}.  Looking at \Cref{thm:intromain}:
\begin{itemize}
\item We need a base field $F$ such that $\Gal{F}{\Q} = \langle \sigma \rangle \simeq C_2$, so we take $F$ a real quadratic field.
\item We need a prime $\frakl$ with residue field $\F_{16}$, so for simplicity we take coefficient field $K_f$ of degree $4$ with $2$ inert.  (There are also fields $K_f$ of degree $>4$ with a prime of residue field~$\F_{16}$, but using such $f$ would make it even harder to compute an explicit polynomial.)
\item We need the image of the determinant to be trivial, so we take trivial Nebentypus character $\chi$.  (Again this is for simplicity; we could take any $\chi$ whose order is a power of $2$.)
\item We must check that \eqref{eq:eta sigma on a frakp} holds; in our case this condition reads
\begin{equation} \label{eqn:asigmafp}
a_{\sigma(\frakp)}(f) \equiv a_{\frakp}(f)^4 \pmod{2}. 
\end{equation}
\item Finally, we need the eigenvalues $a_\frakp(f)$ modulo $2$ to hit every element of $\F_{16}$, so that we can apply \Cref{trace lemma} to deduce that the representation has image~$\SL_2(k)$.
\end{itemize}

To find such forms, we search the database of Hilbert modular forms \cite{DonV} available at the $L$-functions and Modular Forms Database (LMFDB) \cite{LMFDB}.  We restrict to Galois-stable level $\frakN$.  

\begin{remark}
Since $\tau$ is nontrivial, we cannot have $f$ arising as a base change from $\Q$.  We also cannot manufacture such forms from twisted base change.  
To see this, suppose the form comes from twisted base change, say $f=f_0 \tensor \psi$ with $f_0$ from $\Q$ (i.e.\ $a_p(f_0) = a_{\sigma(p)}(f_0)$) and $f$ non-CM.  Then for $\frakp$ a split prime, we have $a_\frakp(f)=a_p(f)\psi(\frakp)$, so the congruence
\[ \tau(a_\frakp(f)) \equiv a_{\sigma(\frakp)}(f) \pmod{2} \]
becomes
\[ \tau(\psi(\frakp))\tau(a_p(f)) \equiv \psi(\sigma(\frakp)) a_p(f) \pmod{2}. \]
Of course if $\psi$ is quadratic, then $\tau(a_p(f)) \equiv a_p(f) \pmod{2}$ so in particular we do not have surjective trace modulo $2$.  Thus $\psi$ must have order at least $3$, so $K_f(\psi)=K_f$ is a CM field.  But then we cannot have trivial Nebentypus character, since then the Hecke field is totally real.
\end{remark}

In principle the congruence modulo $2$ could be proved with a finite computation using Hecke--Sturm bounds \cite{GP17}; however, the relevant bound here would be quite large.  When the congruence lifts to an equality $\tau(f)=f^\sigma$ of eigenforms, with $\tau \in \Gal{K_f}{F}$ the nontrivial involution and $\sigma \in \Gal{F}{\Q}$ the nontrivial element, this can be proved almost instantly from the list of eigenforms by using just the first few Hecke eigenvalues.  

We found 18 Hilbert newforms in the LMFDB with these properties.  We group them according to quadratic twist---since these yield the same mod $2$ Galois representation---and order by (absolute) conductor.  In all cases, it turns out that the desired congruence is in fact an equality.

\begin{remark}
More generally, we would seek to prove an equality $\tau(f)-f^\sigma = 2h$ where $h \in S_2(\frakN)$ has algebraic integer Fourier coefficients, as this implies the desired congruence $\tau(a_\frakp(f)) \equiv a_{\sigma(\frakp)}(f) \pmod{2}$ for all good $\frakp$.  More generally, if there is a number ring $R \subseteq \C$, a Hecke module $M$ over $R$ (i.e., a set of pairwise computing maps $T_\frakp \in \End_R(M)$ for all good $\frakp$) and an isomorphism of Hecke modules $M_\C \to S_2(\frakN)$, then identifying $v \leftrightarrow f$ with $v \in M$ up to rescaling by $R^\times$, it is sufficient to prove that $\tau(v) - v^\sigma \in 2M$, which then becomes a congruence between entries in a pseudobasis for $M$.

This mechanism is particularly convenient working with forms on a definite quaternion order $\calO$, using the Jacquet--Langlands correspondence (indeed, this is one way they can be computed; see Demb\'el\'e--Voight \cite{DV}).  In this case, the vector $v$ is a linear combination of functions on the class set of a definite quaternion order \cite[Chapter 17]{Voight:quat}, having its natural integral structure.  We also verified the congruence (indeed, the equality) this way for all of our forms.  
%
\end{remark}


The forms found are as follows.  

\begin{center}
\begin{tabular}{lll}
Field & Field label & Forms \\
\hline\hline
$\Q(\sqrt{3})$ & \lmfdbnf{2.2.12.1} & \lmfdbshortform{2.2.12.1}{578.1-c}, \lmfdbshortform{2.2.12.1}{578.1-d} ${\phantom{\sqrt{0}^1}}$  \\
$\Q(\sqrt{3})$ & \lmfdbnf{2.2.12.1} & \lmfdbshortform{2.2.12.1}{722.1-i}, \lmfdbshortform{2.2.12.1}{722.1-j},  \lmfdbshortform{2.2.12.1}{722.1-k}, 
\lmfdbshortform{2.2.12.1}{722.1-l} \\
$\Q(\sqrt{2})$ & \lmfdbnf{2.2.8.1} & \lmfdbshortform{2.2.8.1}{2601.1-j}, \lmfdbshortform{2.2.8.1}{2601.1-k} \\
$\Q(\sqrt{2})$ & \lmfdbnf{2.2.8.1} & \lmfdbshortform{2.2.8.1}{2738.1-e}, \lmfdbshortform{2.2.8.1}{2738.1-f} \\
$\Q(\sqrt{3})$ & \lmfdbnf{2.2.12.1} & \lmfdbshortform{2.2.12.1}{1587.1-i}, \lmfdbshortform{2.2.12.1}{1587.1-l}, \lmfdbshortform{2.2.12.1}{1587.1-m}, \lmfdbshortform{2.2.12.1}{1587.1-n} \\
$\Q(\sqrt{6})$ & \lmfdbnf{2.2.24.1} & \lmfdbshortform{2.2.24.1}{726.1-i}, \lmfdbshortform{2.2.24.1}{726.1-j}, \lmfdbshortform{2.2.24.1}{726.1-k}, \lmfdbshortform{2.2.24.1}{726.1-l} \\
\end{tabular}
\end{center}

\begin{theorem} \label[theorem]{thm:17T7ineff}
The group $G=\textup{\textsf{17T7}}$ is a Galois group over $\Q$.
\end{theorem}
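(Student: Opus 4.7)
The plan is to exhibit a single Hilbert newform from the table compiled above and to verify the hypotheses of \Cref{mainthm} together with the large-image criterion of \Cref{trace lemma}, so that \Cref{cor:maincor} immediately yields the desired \textsf{17T7}-extension. I would pick the form $f=$\,\lmfdbform{2.2.12.1-578.1-d} over the real quadratic field $F=\Q(\sqrt{3})$; by construction its Nebentypus is trivial, and its coefficient field $K_f$ has degree~$4$ with $2$ inert, so $\F_\frakl\simeq\F_{16}$ and $D_\frakl/I_\frakl\simeq\Gal{\F_{16}}{\F_2}\simeq C_4$, which has a unique order-$2$ subgroup that will play the role of the target of $\tau$.

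First I would check the descent condition of \eqref{eq:eta sigma on a frakp}. Since the Nebentypus is trivial, the second condition is automatic, and the first requires showing
\begin{equation*}
a_{\sigma(\frakp)}(f)\;\equiv\;a_\frakp(f)^4\pmod{\frakl}
\end{equation*}
for all good $\frakp$, where $\sigma$ generates $\Gal{F}{\Q}$ and $\tau$ is the Frobenius of order $2$ in $\Gal{\F_{16}}{\F_2}$ (equivalently, the nontrivial element of $\Gal{K_f}{F}$ once one identifies this with $\Gal{\F_{16}}{\F_4}$). As the paper observes for every form on the list, this congruence in fact lifts to the equality $\tau(f)=f^\sigma$ of Hecke eigenforms, and thus can be certified from a finite amount of data: either by comparing the first several Hecke eigenvalues directly (using standard multiplicity-one for Hilbert eigenforms), or, more robustly, by transferring to the definite side via Jacquet--Langlands and checking that $\tau(g)\equiv g^\sigma\pmod{2\Z_{K_f}}$ on the finite-dimensional space $S_2(\calO,\Z_{K_f})$, as described after \Cref{mainthm}.

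Next I would verify that the image of $\rho_{f,\frakl}$ contains $\SL_2(\F_{16})$. Because $\chi$ is trivial and $\Char \F_\frakl = 2$, the determinant map \eqref{eqn:GL2kA} forces $A=\{1\}$, so by \Cref{proj im of Gmax} it suffices to show $\tr(\img\rho_{f,\frakl})=\F_{16}$. By the Chebotarev density theorem, this reduces to computing enough Hecke eigenvalues $a_\frakp(f)\bmod\frakl$ to exhaust all $16$ residues in $\F_{16}$; this is a finite and cheap check, and \Cref{trace lemma} (which applies since $\#\F_{16}=16\geq 7$) then forces the image to be exactly $\SL_2(\F_{16})=\PSL_2(\F_{16})$.

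Having verified both conditions, \Cref{mainthm} produces an injection
\begin{equation*}
\Gal{F(\rho_{f,\frakl})}{\Q}\hookrightarrow \GL_2(\F_{16})\rtimes\Aut(\F_{16})
\end{equation*}
whose image is $\SL_2(\F_{16})\rtimes \Gal{F}{\Q}=\PSL_2(\F_{16})\rtimes C_2$ with $C_2$ acting by the order-$2$ Frobenius $a\mapsto a^4$. By \Cref{exm:17T7} this group is exactly \textsf{17T7}, and \Cref{cor:maincor} shows that the corresponding extension is Galois over $\Q$. The main obstacle is the congruence check in the previous step: while any one congruence is trivial to verify, certifying it for \emph{all} good primes requires either establishing the equality of eigenforms (which reduces the computation to a finite Hecke-comparison) or invoking the Jacquet--Langlands transfer and checking equality modulo~$2\Z_{K_f}$ on a finite-dimensional module---everything else is a bounded computation that can be carried out directly from the data attached to the LMFDB form.
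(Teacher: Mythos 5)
Your proof is correct and follows the same route as the paper, which also combines \Cref{trace lemma} and \Cref{mainthm} applied to the newforms in the table (the paper simply references all of them at once, whereas you work through the verification concretely for a single form). One small point: \Cref{mainthm} alone already realizes the Galois extension over $\Q$; \Cref{cor:maincor} adds nothing in characteristic $2$ since $\PSL_2(\F_{16})=\SL_2(\F_{16})$, so invoking it is harmless but redundant.
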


\begin{proof}
Applying \Cref{trace lemma} and \Cref{mainthm} to the Hilbert modular forms above, we find at least $6$ different number fields. %
\end{proof}

\begin{remark}
We later also found the Hilbert modular form \lmfdbform{2.2.77.1-99.1-j} as an example of $f^\sigma \equiv \tau(f) \pmod{2}$ but $f^\sigma \neq \tau(f)$.  
\end{remark}

\begin{remark} \label[remark]{rmk:other groups}
While the main focus of this article is $G = \textup{\textsf{17T7}}$, our methods also apply to other groups similarly arising as semidirect products. For example, one can realize $\PSL_2(\F_{64}) \rtimes C_i$ for $i \in \{2, 6\}$ as a Galois group over $\Q$ by exhibiting the forms 
\lmfdbform{2.2.12.1-1250.1-m} and \lmfdbform{6.6.1229312.1-64.1-f}.
\end{remark}

\section{(Re)constructive approach} \label[section]{sec:constructive}

In this section, we show how to explicitly go from a modular form $f$, as in \Cref{sec:existence}, to an explicit polynomial that realizes the desired Galois group, e.g., 17T7.
We first note in \Cref{subsection:Eichler--Shimura construction} that there is an abelian variety $A_f$ associated to $f$ whose $L$-function is related to that of $f$.
In \Cref{SS:PeriodLattice}, we then define a conjectural period lattice attached to $A_f(\C)$.
Next, in \Cref{sect:periods}, we use Oda's conjecture, which can be viewed as an analogue of the BSD formula, to guess the period lattice from the central value of the (twisted) $L$-function of $f$.
Finally, in \Cref{subsec:polynomial}, from this period lattice, we construct a polynomial whose roots correspond to $\frakl$-isogenies from $A_f$, and which has the desired Galois group. 

\subsection{Notation}
In the remainder of the paper, we discuss a constructive method to realize the Galois groups obtained from Hilbert modular forms as in the previous section, and in particular those in \Cref{thm:17T7ineff} realizing \textsf{17T7}.
To accomplish this task, we proceed as outlined in 
\cref{sec:overview}: we (conjecturally) compute periods via twists and construct a moduli point from ratios of these periods, and repeat for the Galois conjugates.  We could then try to reconstruct an abelian variety as a (quotient of a) Jacobian; here, we instead evaluate modular functions to obtain the $2$-isogeny polynomial.

As before, let $f$ be a Hilbert newform of parallel weight 2 and level $\frakN$ over the totally real field $F$, and let $n = [F:\Q]$. Let $K_f \colonequals \Q(\{a_{\frakp}(f)\}_{\frakp})$ be the field generated by its Hecke eigenvalues, and let $g = [K_f:\Q]$. Fix orderings of the embeddings $\sigma_i\colon F \hookrightarrow \R$ where $i=1, \ldots, n$, and $\tau_j\colon K_f \hookrightarrow \C$ where $j = 1, \ldots, g$.

\subsection{Eichler--Shimura construction}
\label[subsection]{subsection:Eichler--Shimura construction}

We begin with the following fundamental conjecture.

\begin{conj}[Eichler--Shimura conjecture] \label[conj]{conj:EichShim}
Let $f$ be a Hilbert newform over $F$ of parallel weight $2$ and level $\frakN$ and Hecke field $K_f$.
Then there exists an abelian variety $A_f$ over $F$ such that
\[ L(A_f,s)=\prod_{j=1}^g L(\tau_j(f),s). \]
More precisely, for every prime $\frakp \nmid \frakN$, we have
\[ L_\frakp(A_f,T) = \prod_{j} L_\frakp(\tau_j(f),T) = \prod_{j} \left(1-\tau_j(a_\frakp(f)) T + \Nm(\frakp) T^2\right) \]
where $a_\frakp(f) \in K_f$ is the $\frakp$-Hecke eigenvalue of $f$.
\end{conj}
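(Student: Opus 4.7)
The plan is to adapt the classical Eichler--Shimura construction to the Hilbert modular setting, replacing the modular curve with a Shimura variety whose first cohomology realizes the Galois representation attached to $f$. In the classical case $F=\Q$, one cuts out $A_f$ as an isogeny factor of $\Jac(X_0(\frakN))$ using a Hecke idempotent, and the $L$-function identity follows from the Eichler--Shimura congruence relation; I would proceed in stages to mimic this.

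First, I would invoke the Jacquet--Langlands correspondence whenever it makes a one-dimensional Shimura variety available. Let $n=[F:\Q]$. If $n$ is odd, there exists a quaternion algebra $B/F$ ramified at every infinite place except one and unramified at every finite place, and $f$ transfers to an automorphic form on the associated Shimura curve $X_B$. If $n$ is even, the transfer still exists provided $f$ has a finite place $\frakp \mid \frakN$ of discrete series type, in which case we take $B$ ramified at $\frakp$ and at all but one infinite place. In either situation the Hecke algebra $\T$ acts faithfully on $\Jac(X_B)$, and $f$ determines a maximal ideal $\frakm_f \subseteq \T$ with annihilator $I_f$; I would set
\[ A_f \colonequals \Jac(X_B)/I_f\Jac(X_B), \]
an abelian variety over $F$ that, by the standard dimension count from the local factors at infinity, has dimension $g=[K_f:\Q]$. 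The required $L$-function identity then follows from the Eichler--Shimura congruence on $X_B$ matching Hecke eigenvalues with Frobenius traces on the $\frakl$-adic Tate module of $\Jac(X_B)$, together with the compatibility of these traces with the Galois representations $\rho_{f,\frakl^\infty}$ supplied by \Cref{thm:HilbertGalrep}, summed over the embeddings $\tau_j$.

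The hard case---and the reason the statement remains a conjecture rather than a theorem---is $n$ even with $f$ unramified principal series at every finite place. Here no Shimura curve carries $f$: only the higher-dimensional Hilbert modular variety does, and its middle cohomology does not directly arise as the $H^1$ of an abelian variety, so no candidate for $A_f$ is geometrically visible. One still knows the expected $\frakl$-adic representation $\bigoplus_j \tau_j(\rho_{f,\frakl^\infty})$ exists by pseudorepresentation/congruence arguments, but producing an honest abelian variety whose Tate module realizes this motive is a Fontaine--Mazur-type question beyond present techniques. Fortunately, the forms used in the rest of the paper admit a finite special place (see \Cref{prop:descendtoQQ}), so the Jacquet--Langlands route applies and $A_f$ is realized as an isogeny factor of a Shimura curve Jacobian over $F$; what remains---and what the subsequent sections attack---is the effective task of extracting a period matrix for this $A_f$ from $L$-values via Oda's conjecture.
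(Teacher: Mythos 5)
The statement you were asked to prove is in fact stated as a \emph{conjecture} in the paper, and the paper offers no proof of it; instead, \Cref{thm:EichShim-itsathm} immediately afterward records the cases in which it is known (namely when some $\frakq \parallel \frakN$ or when $[F:\Q]$ is odd), and its one-line proof simply cites Zhang~\cite[Theorem~B]{Zhang} and Demb\'el\'e--Voight~\cite[Theorem~3.9]{DV} for the Jacquet--Langlands transfer to a Shimura curve and the realization of $A_f$ as an isogeny factor of its Jacobian. Your proposal correctly recognizes the statement's conjectural status, reconstructs the Jacquet--Langlands/Shimura-curve argument behind the known cases in essentially the same way, and accurately pinpoints the remaining open case ($[F:\Q]$ even and $f$ everywhere unramified principal series) as the obstruction to a full proof, so it matches the paper's treatment.
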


\begin{thm} \label[theorem]{thm:EichShim-itsathm}
Suppose that either there exists a prime $\frakq \parallel \frakN$ or that $[F:\Q]$ is odd.  Then \Cref{conj:EichShim} holds.  
\end{thm}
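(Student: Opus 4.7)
The plan is to realize $A_f$ as a Hecke-isotypic factor of the Jacobian of a suitable Shimura curve. I choose a quaternion algebra $B$ over $F$ split at the archimedean place $\sigma_1$ and ramified at $\sigma_2, \ldots, \sigma_n$, where $n = [F:\Q]$; if $n$ is odd, this is all the ramification, while if $n$ is even, then by hypothesis $\frakq \parallel \frakN$ and I add $\frakq$ to the ramification set. In either case the ramification set has even cardinality, so such a $B$ exists by Albert--Brauer--Hasse--Noether, and the associated Shimura curve $X^B$ of Eichler level $\frakN$ (respectively $\frakN/\frakq$) admits a canonical model over $F$ by the theory of Shimura and Deligne.

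Next, I apply the Jacquet--Langlands correspondence to transfer the cuspidal automorphic representation $\pi_f$ of $\GL_2(\bbA_F)$ attached to $f$ to a cuspidal representation $\pi_f^B$ of $B^\times(\bbA_F)$. The local transfer at $\frakq$ in the second case uses precisely the hypothesis $\frakq \parallel \frakN$: under this assumption $\pi_{f,\frakq}$ is Steinberg, which lies in the image of the local Jacquet--Langlands correspondence for the ramified quaternion algebra $B_\frakq$. Since $\pi_f^B$ is of parallel weight $2$, the Hecke eigensystem $\{a_\frakp(f)\}_\frakp$ then occurs in $H^0(X^B, \Omega^1)$, and thus in the Jacobian $J = \Jac(X^B)$, which carries a Hecke action via correspondences on $X^B$.

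Finally, I define $A_f$ as the quotient of $J$ by the ideal of the Hecke algebra annihilating the $f$-isotypic component; strong multiplicity one for $\GL_2$, together with the simultaneous presence of all Galois conjugates $\tau_j(f)$ in $J$, forces $\dim A_f = g$. The equality $L(A_f, s) = \prod_{j=1}^g L(\tau_j(f), s)$ then follows from the Eichler--Shimura congruence relation on $X^B$: at each prime $\frakp \nmid \frakN \ell$, the characteristic polynomial of $\Frob_\frakp$ on the $\ell$-adic Tate module of $A_f$ matches $\prod_j (1 - \tau_j(a_\frakp(f))T + \Nm(\frakp)T^2)$, which is exactly the compatibility recorded in \Cref{thm:HilbertGalrep} (due to Carayol, Taylor, and Blasius--Rogawski). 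The main technical hurdle lies in matching local factors at the primes dividing $\frakN$, especially at $\frakq$ in the second case, where the local structure of $X^B$ at $\frakq$ (via Cerednik--Drinfeld-type uniformization) must yield the expected Steinberg local factor, and in showing that the Hecke-isotypic quotient descends to an abelian variety over $F$ rather than a larger field of definition; both issues are addressed by the cited works.
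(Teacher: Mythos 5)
Your proposal is correct and takes the same approach as the paper, which simply cites Zhang~\cite[Theorem~B]{Zhang} and Demb\'el\'e--Voight~\cite[Theorem~3.9]{DV} rather than spelling out the argument. Your sketch fills in the standard details behind those references: choosing a quaternion algebra $B/F$ with ramification set of even cardinality (the $n$-parity dichotomy exactly forces the choice you make), applying Jacquet--Langlands to realize the Hecke eigensystem of $f$ in $\Jac X^B$, and taking the Hecke-isotypic quotient. One small point worth making explicit: the step ``$\frakq \parallel \frakN$ implies $\pi_{f,\frakq}$ is Steinberg'' depends on the Nebentypus being unramified at $\frakq$; if it were ramified one could have a ramified principal series of conductor $\frakq$, which would not transfer to $B_\frakq^\times$. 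This is implicit in the hypotheses of \Cref{conj:EichShim} (one needs $K_f$ totally real for an abelian variety over $F$ with the stated $L$-function), so the gap is not a real one, but a careful write-up should flag it.
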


\begin{proof}
Under the given hypothesis, the Eichler--Shimizu--Jacquet--Langlands correspondence holds, and $A_f$ is realized up to isogeny as a quotient of the Jacobian of a Shimura curve \cite[Theorem B]{Zhang}.  For further references, discussion, and examples, see Demb\'el\'e--Voight \cite[Theorem 3.9]{DV}.
\end{proof}

We note that the abelian variety $A_f$ is only well-defined up to isogeny over $F$.  The cases of \Cref{conj:EichShim} missing from \Cref{thm:EichShim-itsathm} are still open, for example when $F$ is a real quadratic field and $\frakN$ is a square.

When $F=\Q$, we can take the Shimura curve to be a modular curve, in which case we can integrate the modular form against a basis of modular symbols to get an analytic realization, giving a big period matrix for $A_f$ (over $\C$).  By contrast, the construction via Shimura curves is a bit oblique: although effective methods are available \cite{GV,VW}, it is still desirable to find an effective way to go more directly from the Hecke eigenvalues (equivalently, the $q$-expansions) of a Hilbert newform to an analytic realization.  

\subsection{Period lattice}
\label[subsection]{SS:PeriodLattice}

In this section, we define a conjectural period lattice attached to a normalized Hilbert newform of parallel weight $2$, following Oda \cite{Oda,Oda2}, Darmon--Logan \cite{DL}, Bertolini--Darmon--Green \cite[\S 7]{Bertolini}, and others, which was made effective for elliptic curves over real quadratic fields by Demb\'el\'e \cite{Dembele}.  

From now on, to simplify notation we abbreviate $K=K_f$. 
Recall that $F$ is a totally real field of degree $n$ and let $\Z_F$ be its ring of integers.
Moreover, let $F_{>0}^\times < F^\times$ be the subgroup of totally positive elements, let 
\begin{equation}
\Fhat \colonequals \prodprime_{\frakp} F_\frakp
\end{equation} 
be the finite adeles of $F$, and let $\Zhat_F \colonequals \prod_{\frakp} \Z_{F,\frakp} \subset \Fhat$ be the profinite completion of $\Z_F$ inside $\Fhat$.  Let $\Gammahat \leq \GL_2(\Zhat_F)$ be a finite index subgroup, let $\h{n}{\pm} \colonequals (\C \smallsetminus \R)^n$, and let 
\begin{equation}
Y(\Gammahat) \colonequals \GL_2(F) \backslash \GL_2(\Fhat) \times \h{n}{\pm} / \Gammahat  
\end{equation}
where $\GL_2(F)$ acts on the first factor by left multiplication and by linear fractional transformations on $\h{n}{\pm}$, with the action on the $i$-th component of $\h{n}{\pm}$ induced by the embedding $\sigma_i$, and where $\Gammahat$ acts by right multiplication on $\GL_2(\Fhat)$.  Then 
\begin{equation}
Y(\Gammahat) = \bigsqcup_{[\frakb]} \Gamma_\frakb \backslash \h{n}{}
\end{equation}
where $\h{n}{} \subseteq \h{n}{\pm}$ is the connected component of $(i,\dots,i)$ (the product of $n$ upper half-planes), the set $[\frakb]$ ranges over the class group $F_{>0}^\times \backslash \Fhat^\times / \det(\Gammahat)$,
and $\Gamma_\frakb$ is idelically conjugate to $\Gamma \colonequals \Gammahat \cap \GL_2(F)_{>0}$, a discrete group acting properly on $\h{n}{}$ \cite[38.7.15]{Voight:quat}.  
Finally, let $X(\Gammahat) \to Y(\Gammahat)$ be a smooth (toroidal) compactification of $Y(\Gammahat)$.  Then $X(\Gammahat)$ is a disjoint union of smooth complex projective varieties of dimension $n$.  

\begin{example}
In our case, we are interested in particular in the following special case: $\widehat{\Gamma} = \widehat{\Gamma}_1(\frakN)$, the standard congruence subgroup such that in the components with $\frakp^e \parallel \frakN$, the matrix is congruent to $\begin{pmatrix} 
1 & * \\ 0 & * 
\end{pmatrix}$
modulo $\frakp^e$.  Then $\det(\Gammahat)=\widehat{\Z}_F^\times$, so the components are indexed by elements $[\frakb] \in \Cl^+ \Z_F$ in the narrow class group of $F$.  
\end{example}

We now define Frobenius elements at infinity as follows.
Let $W_\infty \colonequals \{\pm 1\}^n$.  Write $s_i = (1,\dots,1,-1,1,\dots,1) \in W_\infty$ with $-1$ in the $i$-th place.  Define
$$
\varepsilon_{s_i}(z_1,\dots,z_n) = (z_1,\dots,z_{i-1}, \overline{z_i},z_{i+1},\dots,z_n)
$$
for $z=(z_1,\dots,z_n) \in \h{n}{\pm}$, and extend to $s \in W_\infty$.  Then the action of $W_\infty$ descends to $Y(\Gammahat)$ and then extends to $X(\Gammahat)$ \cite[(1.3)]{Oda2}.

\begin{example}
If there exists $\eta \in \Z_F^\times$ such that $\sgn(\eta)=s$, then we may take $\varepsilon_s((z_i)_i) = (s_i \eta_i z_i)_i$
---this is the star involution in the case of modular curves ($z \mapsto -\overline{z}$, the unit being $-1$).  
\end{example}

Then $W_\infty$ acts on $H^n(X,\Q)$ by pullback, and we get $\varepsilon_s^*$-eigenspaces.  The operators $\varepsilon_s^*$ also commute with
the action of the Hecke operators $T_\frakn$ for ideals $\frakn$ coprime to $\frakN$.

Suppose now that $\Gammahat$ is a standard congruence subgroup and $f$ is a Hilbert newform on $X(\Gammahat)$ with parallel weight $2$.  The eigenspace for the Hecke operators $T_\frakn$ acting on $H^n(X,\Q)$ matching $f$ is a $\Q$-subspace $V_f \subseteq H^n(X,\Q)$ with an action of $K$ such that $\dim_{K} V_f = 2^n$, for example containing
\begin{equation}
\omega_{\tau_j(f)} \colonequals (2\pi i)^n \tau_j(f)(z_1,\dots,z_n)\,\mathrm{d}z_1\,\dots\,\mathrm{d}z_n \in H_{\textup{dR}}^n(X(\Gammahat),\C) \, ,\end{equation}
for any embedding $\tau_j \colon K \hookrightarrow \C$ \cite[(2.1)]{Oda2}.
Moreover, $V_f$ inherits an action of $W_\infty$.  

\begin{thm}
The $K$-vector space $V_f$ can be equipped with a polarized $K$-Hodge structure, with
\begin{equation}
V_f \otimes_\Q \C \simeq \bigoplus_{j} V_f \otimes_{\tau_j} \C 
\end{equation}
such that for all $1 \leq j \leq g$ and all $0 \neq p \leq n$,
\begin{equation}
(V_f \otimes_{\tau_j} \C)^{p,n-p} = \bigoplus_{s} \C \varepsilon_s^*(\omega_{\tau_j(f)})
\end{equation}
where we sum over $s \in W_\infty$ with $p$ plus signs.
\end{thm}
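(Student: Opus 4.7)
The plan is to inherit the Hodge structure from the ambient cohomology of $X(\Gammahat)$ and then exhibit the claimed isotypic decomposition via the $W_\infty$-action, following Oda \cite{Oda,Oda2}. First, I would observe that each connected component of $X(\Gammahat)$ is a smooth complex projective variety of dimension $n$, so $H^n(X(\Gammahat),\Q)$ carries a pure Hodge structure of weight $n$. The Hecke operators $T_\frakn$ for $\frakn$ coprime to $\frakN$ are realized by algebraic correspondences on $X(\Gammahat)$ and hence preserve the Hodge decomposition; consequently the Hecke-isotypic component $V_f$ is a sub-$\Q$-Hodge structure, and because the $K_f$-action comes from these same correspondences, it is $K_f$-linear. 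Tensoring with $\C$ and decomposing $K_f \otimes_\Q \C \simeq \prod_j \C$ via the embeddings $\tau_j$ yields the promised identification $V_f \otimes_\Q \C \simeq \bigoplus_j V_f \otimes_{\tau_j} \C$, with each summand a sub-$\C$-Hodge structure of weight $n$.

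Next, I would pin down the Hodge types appearing in each $V_f \otimes_{\tau_j} \C$. The differential form $\omega_{\tau_j(f)}$ is holomorphic in all $n$ variables, hence lies in $(V_f \otimes_{\tau_j} \C)^{n,0}$. For a sign vector $s \in W_\infty$ with minus signs in the positions $I \subseteq \{1,\dots,n\}$, the involution $\varepsilon_s$ is anti-holomorphic precisely in the coordinates indexed by $I$, so pullback carries $dz_i \mapsto d\overline{z_i}$ for $i \in I$ and acts trivially on $dz_j$ for $j \notin I$. If $s$ has $p$ plus signs, then $\varepsilon_s^*(\omega_{\tau_j(f)})$ is therefore of Hodge type $(p,n-p)$, and the $2^n$ translates over $s \in W_\infty$ are linearly independent over $\C$ by their distinct bidegrees in the $dz_i, d\overline{z_i}$. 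Since $\dim_\C(V_f \otimes_{\tau_j} \C) = 2^n$ and $\sum_{p=0}^n \binom{n}{p} = 2^n$, these translates must exhaust $V_f \otimes_{\tau_j} \C$, and purity of their bidegrees forces the span indexed by sign vectors with exactly $p$ plus signs to fill all of $(V_f \otimes_{\tau_j} \C)^{p,n-p}$.

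Finally, for the polarization I would restrict the cup-product pairing $H^n(X(\Gammahat),\Q) \times H^n(X(\Gammahat),\Q) \to H^{2n}(X(\Gammahat),\Q) \simeq \Q$ to $V_f$ and combine it with the $K_f/\Q$ trace form to upgrade this to a $K_f$-valued pairing compatible with the $K_f$-action on $V_f$. The main step, and the one I expect to be the principal obstacle, is verifying the Riemann bilinear relations together with $K_f$-sesquilinear compatibility under each embedding $\tau_j$: one needs positivity on each Hodge summand and the correct sign behavior across the $\varepsilon_s^*$-translates. This is the technical heart of Oda's construction, but once in place it yields the polarized $K_f$-Hodge structure asserted in the theorem.
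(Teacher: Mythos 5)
The paper gives no argument of its own for this theorem; it simply cites Oda \cite[Construction (3.23)(iii)]{Oda2}, so there is nothing in the text to match your proposal against. Your sketch is a reasonable reconstruction of the Oda-style argument: the ambient Hodge structure on $H^n(X(\Gammahat),\Q)$, the Hecke correspondences cutting out $V_f$ as a sub-$K_f$-Hodge structure, the decomposition of $K_f \otimes_\Q \C$ by embeddings, the observation that $\varepsilon_s$ is anti-holomorphic in the coordinates where $s$ has a minus sign, and the dimension count $\sum_p \binom{n}{p} = 2^n = \dim_\C(V_f\otimes_{\tau_j}\C)$.

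There is, however, a genuine gap in the linear independence step. You assert that the $2^n$ translates $\varepsilon_s^*(\omega_{\tau_j(f)})$ are independent ``by their distinct bidegrees in the $dz_i, d\overline{z_i}$,'' but the Hodge bidegree $(p,n-p)$ only distinguishes sign vectors with \emph{different} numbers of plus signs; the $\binom{n}{p}$ translates with the same number $p$ of plus signs all land in the same $(p,n-p)$-summand. What separates them is a finer $2^n$-fold grading on $H^n$ indexed by the full $n$-tuple of per-factor types $(p_1,q_1),\dots,(p_n,q_n)$, but that refinement is not automatic on the quotient $\Gamma\backslash\calH^n$ or its toroidal compactification: the product structure of $\calH^n$ is not preserved, and establishing this generalized Hodge decomposition on (interior/cuspidal) cohomology is precisely a nontrivial part of Oda's theory. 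An alternative is to argue via the $W_\infty$-character decomposition of $V_f\otimes_{\tau_j}\C$ and show that $\omega_{\tau_j(f)}$ has nonzero projection to each of the $2^n$ character spaces, but you would still need an input beyond the naive Hodge bigrading. The polarization step you explicitly defer to Oda, which is fine given that the paper does the same, but as written the proposal does not close the independence gap.
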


\begin{proof}
See Oda \cite[Construction (3.23) (iii)]{Oda2}.
\end{proof}

Let $\gamma_s \in H_n(X,\Q)$ be a dual basis to $\varepsilon_s^* \omega_{\tau_1(f)}$ where $s$ ranges over $W_{\infty}$. Define
\begin{equation}
\Omega_j^s \colonequals \int_{\gamma_s} \omega_{\tau_j(f)} \in \C.
\end{equation}
for $j = 1, \ldots, g$.
We map
\[ K \hookrightarrow \M_g(\C) \]
by diagonal matrices taking the embeddings $\tau_1,\dots,\tau_g$.
For $s \in \{s_1, \ldots, s_n\}$, let  
\begin{equation} 
V_{f,s} \colonequals K \begin{pmatrix}
\Omega_1^{s} \\ \vdots \\ \Omega_g^{s} 
\end{pmatrix} 
\oplus K \begin{pmatrix}
\Omega_1^{+} \\ \vdots \\ \Omega_g^{+} 
\end{pmatrix} \subsetneq \C^g
\end{equation} 
where we abbreviate $+=(+1,\dots,+1)$.

\begin{conj}[{\cite[Main Conjecture $\textup{A}^{\textup{split}}$, p.\,xii]{Oda}}] \label[conj]{conj:odaconj}
For any choice of lattice $\Lambda \subset V_{f,s}$, we have 
\[ \C^g/\Lambda \sim A_f(\C) \]
for $A_f$ as in \Cref{conj:EichShim}, under the corresponding embedding $\sigma \colon F \injects \C$, i.e., if $s = s_i$, then $\sigma = \sigma_i$.
\end{conj}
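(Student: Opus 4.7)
The plan is to combine the Jacquet--Langlands--Eichler--Shimura construction (\Cref{thm:EichShim-itsathm}) with a comparison of periods between the Hilbert modular variety $Y(\widehat{\Gamma})$ and the auxiliary Shimura curve used to produce $A_f$. Since the statement is Oda's conjecture and remains open in general, this strategy is ultimately conjectural; it reduces the problem to a Hodge-theoretic comparison that can be verified numerically in concrete cases, which is exactly what the rest of the paper exploits.

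First I would produce $A_f$ itself via \Cref{thm:EichShim-itsathm}. Under the hypothesis that either $[F:\Q]$ is odd or some $\frakq \parallel \frakN$, the Jacquet--Langlands correspondence transfers $f$ to a quaternionic newform $f_B$ on a Shimura curve $X_B$ attached to a quaternion algebra $B/F$ split at a single archimedean place $\sigma_i$. Then $A_f$ appears (up to $F$-isogeny) as a quotient of the Jacobian of $X_B$, and the complex uniformization $X_B(\C) \cong \Gamma_B \backslash \calH$ endows $A_f^{\sigma_i}(\C)$ with an explicit period lattice obtained by integrating $f_B$ against a basis of $H_1(X_B(\C), \Z)$.

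Second I would compare these quaternionic periods to the archimedean Hilbert periods $\Omega_j^s$ for $s = s_i$. The $f$-isotypic part $V_f \subset H^n(X(\widehat{\Gamma}), \Q)$ carries a polarized $K_f$-Hodge structure decomposing into $W_\infty$-eigenspaces, each of $K_f$-dimension $g$, and the eigenspace indexed by $s_i$ isolates precisely the summand of the Hodge decomposition that ``sees'' the embedding $\sigma_i$. Pairing this subspace with a chosen lattice $\Lambda \subset V_{f,s_i}$ should recover $H_1(A_f^{\sigma_i}, \Z) \tensor K_f$ up to commensurability, giving the desired isogeny $\C^g / \Lambda \sim A_f^{\sigma_i}(\C)$.

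The hard part is exactly this last identification. The Jacquet--Langlands correspondence supplies an equality of Hecke eigenvalues, and hence an isogeny class over $F$, but it does not directly furnish an isomorphism of Hodge structures between the $f$-isotypic cohomology of the Hilbert modular variety and the cohomology of $A_f$; producing such an isomorphism is essentially the content of Oda's conjecture. For $F = \Q$ this follows from classical Eichler--Shimura theory via modular symbols, and for real quadratic $F$ with $A_f$ an elliptic curve it is the Darmon--Logan refinement, numerically verified in many cases by Demb\'el\'e. In the generality at hand (abelian varieties of dimension $g > 1$ over $F$ of higher degree), one presently has no better strategy than to \emph{accept} the conjecture and verify it computationally at the level of very high precision, which is the route the remainder of the paper pursues to reconstruct $A_f$ from $\Omega_j^s$.
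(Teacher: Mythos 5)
The statement you are addressing is a \emph{conjecture}, not a theorem: the paper provides no proof at all, only a citation to Oda's Main Conjecture $\textup{A}^{\textup{split}}$ (\cite{Oda}, p.~xii). Your proposal correctly recognizes this, so there is no ``paper's proof'' to compare against.

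That said, your discussion is a sound account of the mathematical landscape around the conjecture. The reduction you sketch---produce $A_f$ up to $F$-isogeny via Jacquet--Langlands on a Shimura curve (\Cref{thm:EichShim-itsathm}), then try to match the resulting quaternionic period lattice against the Hilbert periods $\Omega_j^s$---is exactly where the difficulty lies. You put your finger on the right obstruction: the Jacquet--Langlands transfer is an identity of Hecke eigensystems and hence of isogeny classes, but it gives no canonical isomorphism of $K_f$-Hodge structures between the $f$-isotypic piece $V_f \subset H^n(X(\widehat{\Gamma}),\Q)$ and $H^1(A_f)$; supplying such a comparison is precisely the content of Oda's conjecture, proved classically for $F = \Q$ and refined by Darmon--Logan and Demb\'el\'e for elliptic $A_f$ over real quadratic $F$, but open in the generality needed here ($g > 1$). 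The paper's stance is the same: accept \Cref{conj:odaconj} as a working hypothesis, use \Cref{conj:oda} (a theorem of Oda/van der Geer on twisted periods and $L$-values) as the computational entry point, and verify the output \emph{a posteriori}---which is what \Cref{prop:modularity} and \Cref{prop:notnicolas} do for the example at hand. One minor caution: the $W_\infty$-eigenspace $V_{f,s}$ in the paper's normalization is a $2$-dimensional $K_f$-space (spanned by the $s$- and $+$-eigencomponents), not a $g$-dimensional one, so the period lattice $\Lambda_s(\fraka,\frakb)$ has $\Z$-rank $2g$ as required for a complex torus $\C^g/\Lambda$; your phrasing ``each of $K_f$-dimension $g$'' conflates $K_f$-dimension with $\Q$-dimension.
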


The choice of lattice is rather unclear at this point: we may start with 
\begin{equation} \label{eqn:lattice}
\Lambda_s(\fraka,\frakb) = \fraka \begin{pmatrix}
\Omega_1^{s} \\ \vdots \\ \Omega_g^{s} 
\end{pmatrix} 
\oplus \frakb \begin{pmatrix}
\Omega_1^{+} \\ \vdots \\ \Omega_g^{+} 
\end{pmatrix}
\end{equation}
with $\fraka,\frakb$ fractional ideals of $K$.  On the lattice $\fraka \oplus \frakb$ and $c \in K_{>0}^\times$, we define the alternating $\Z$-linear pairing
\begin{equation}
\begin{aligned}\label{eqn:lattice-pairing}
E_c \colon (\mathfrak{a} \oplus \mathfrak{b}) \times (\mathfrak{a} \oplus \mathfrak{b}) &\to \mathbb{Q} \\
        (a_1, b_1), (a_2, b_2) &\mapsto \Tr_{K|\mathbb{Q}}(c(a_1 b_2 - a_2 b_1)).
\end{aligned}
\end{equation}
The pairing can also be considered as a pairing on $\Lambda_s(\fraka,\frakb)$.  
For this pairing to induce a principal polarization, we require 
\begin{equation} 
c\fraka\frakb=\Z_{K}^{\sharp},
\end{equation}
where $\Z_{K}^{\sharp} \colonequals \{a \in K : \Tr_{H|\Q}(a\Z_{K}) \subseteq \Z\}$ is the trace dual (i.e., the codifferent) of $\Z_{K}$. (Cf.~\cite[Corollary 2.10]{Goren}.)
When $\Cl^+(\Z_{K})$ is trivial, we take just $\Lambda_s(\Z_{K},\Z_{K})$ with $c$ a totally positive generator of the codifferent.

Choosing a symplectic basis for $\Lambda_s(\Z_{K},\Z_{K})$ with respect to the pairing $E_c$ \eqref{eqn:lattice-pairing}, one obtains a big period matrix $\Pi_s \in \M_{g,2g}(\C)$.
The \defi{small period matrix} $Z \in \hSiegel{g}$ is the $g \times g$ symmetric matrix with totally positive definite imaginary part defined by the property
  \begin{equation}\label{eqn:small-period-matrix}
  \Pi_s \sim \begin{pmatrix} Z & 1_g \end{pmatrix}.
  \end{equation}

  In similar fashion we define
\begin{equation} \label{eqn:H_moduli}
z_{f,s} \colonequals \left(\frac{\Omega_1 ^{s}}{\Omega_1 ^{+}}, \ldots, \frac{\Omega_g ^{s}}{\Omega_g ^{+}}\right)  \in \h{g}{}.
\end{equation}
With the choices made above, this is the same abelian variety with RM by $\Z_K$ as a point in one component of the Hilbert moduli space.
Of course, this moduli point is only unique up to the natural action of $\SL_2(\Z_K)$.  

\subsection{Periods and $L$-values}\label[subsection]{sect:periods}

Recall that each embedding $\tau_j$ gives rise to an $L$-function
\[
    L(\tau_j(f),s) =  \prod_{\frakp \mid \frakN} \bigl(1 - \tau_j(a_\frakp) \Nm(\frakp)^{-s}\bigr)^{-1} \prod_{\frakp \nmid \frakN} \bigl(1 - \tau_j(a_\frakp) \Nm(\frakp)^{-s} + \Nm(\frakp)^{1-2s} \bigr)^{-1}
\]

Let $\frakc \subseteq \Z_F$ be a nonzero ideal.  Let $F_{\frakc\infty}$ be the Hilbert class field of $F$ of conductor $\frakc\infty$, and let 
\begin{equation}
\chi \colon \Gal{F_{\frakc\infty}}{F} \to \C^\times 
\end{equation}
be a (narrow ray class) character.
By class field theory, $\chi$ corresponds also to a (finite order) Hecke character of modulus $\frakc$.  Associated to $\chi$ is its finite part
\begin{equation}
\chi_0 \colon (\Z_F/\frakc)^\times \to \C^\times
\end{equation}
and an infinite part \defi{sign} 
\begin{equation}
\chi_{\infty} \colon \{\pm 1\}^n \to \{\pm 1\},
\end{equation}
satisfying the compatibility
\begin{equation}
\chi(a\Z_F)^{-1} = \chi_0(a)\chi_{\infty}(\sgn(a)) 
\end{equation}
for all $a \in \Z_F$ coprime to $\frakc$, where $\sgn \colon F^\times \to \{\pm 1\}^n$ records the signs under the real embeddings of $F$.  In the notation above, we have $\chi_{\infty} \in W_\infty$.  

Denote by $\Lchi{s}$ %
the twist of this $L$-function by the Hecke character $\chi$. The Euler factors of this twisted $L$-function at the primes $\frakp$ not dividing $\frakc + \frakN$ are
\begin{equation}
    1 - \tau_j(a_\frakp \chi(\frakp)) \Nm(\frakp)^{-s} + \tau_j(\chi(\frakp))^2 \Nm(\frakp)^{1-2s}.
\end{equation}
Moreover, it has an analytic continuation to the whole complex plane, and its completed $L$-function satisfies a functional equation.

The following theorem, originally stated by Oda in \cite[Prop.\ 16.3]{Oda} for $F$ of narrow class number one, relates the twisted periods with the special values of certain twisted \hbox{$L$-functions} associated to the Hilbert modular form.

 \begin{theorem}[\textrm{\cite[Theorem VI.7.5]{VanderGeer}}]\label[theorem]{conj:oda}
     Let $\chi\colon (\Z_F/\frakc)^\times \to \C^\times$ be a quadratic character of sign $s \in W_\infty$.
     There exists $\alpha_{\chi} \in K$ such that for all $j = 1, \ldots, g$
    \begin{align*}
      \tau_j(\alpha_{\chi}) \Omega^{s}_j &= -4\pi^2 \sqrt{\disc(F)} G(\overline{\chi}) \Lchi{1},
    \end{align*}
    where $G(\chi)$ is the Gauss sum of $\chi$.
 \end{theorem}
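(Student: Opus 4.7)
The plan is to derive the formula from a Birch--Shimura-style integral representation of the twisted Hilbert modular $L$-function at $s=1$, combined with the $K$-rationality of Hecke-eigen modular symbols on $X(\Gammahat)$. The factor $-4\pi^2 = (2\pi i)^n$ corresponds to $n = [F:\Q] = 2$, the setting of van der Geer's reference, and reflects the normalization $\omega_{\tau_j(f)} = (2\pi i)^n \tau_j(f)\,dz_1\cdots dz_n$ fixed in \Cref{SS:PeriodLattice}.

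First, I would unfold the twisted Hecke $L$-series against $\chi$ and apply the Mellin transform in each of the $n$ archimedean variables to obtain a formula of the shape
\[ G(\overline{\chi})\, L(\tau_j(f), 1, \chi) = C_F \sum_{a \in (\Z_F/\frakc)^{\times}} \overline{\chi_0}(a) \int_{P_a} \tau_j(f)(z)\, dz_1 \cdots dz_n, \]
where $C_F$ is an explicit constant gathering $\sqrt{\disc F}$ and a sign, and each $P_a$ is the Hilbert modular symbol given by the product of the imaginary-axis paths from $a/\frakc$ to $i\infty$ in each of the $n$ upper half-plane factors of $X(\Gammahat)$. This is the direct generalization of the classical Manin--Birch formula to the Hilbert modular setting.

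Next, since $\chi$ is quadratic of sign $\chi_\infty = s$, the star involutions $\varepsilon_{s'}^{\ast}$ act on $\sum_a \overline{\chi}(a)[P_a]$ by $\chi_\infty(s')$, since they permute the cusps $a/\frakc$ compatibly with the sign decomposition of $\overline{\chi}$. Hence this $\Q$-rational relative cycle lies in the $s$-eigenspace of $H_n(X(\Gammahat), \Q)$. Its $f$-isotypic projection is a $K$-multiple of the dual basis class $\gamma_s$, say $\alpha_\chi \gamma_s$ for some $\alpha_\chi \in K$ that is independent of $\tau_j$, because the entire construction takes place in rational Betti homology and the Hecke algebra acts through $K$. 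Pairing against $\omega_{\tau_j(f)}$ then yields
\[ \int_{\alpha_\chi \gamma_s} \tau_j(f)(z)\, dz_1 \cdots dz_n = \tau_j(\alpha_\chi)\, \frac{\Omega_j^s}{(2\pi i)^n}, \]
and substituting into the displayed integral representation, multiplying by $(2\pi i)^n$, and collecting constants produces the stated identity.

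The main obstacle is the $K$-rationality assertion for $\alpha_\chi$: one must align the Hecke-algebraic $K$-structure on $H^n(X(\Gammahat), \Q)$ with the specific normalization of $\gamma_s$ fixed in \Cref{SS:PeriodLattice}, and verify that the modular symbol sum projects onto a single $K$-scalar rather than onto a mixed combination of Galois conjugates in $K \otimes_\Q \C$. The remaining bookkeeping of constants --- $\sqrt{\disc F}$, the Gauss sum conventions, the overall sign $(-1)^n$, and the Euler factors at primes dividing $\frakc$ --- is delicate but routine, and is precisely the place where the factor $-4\pi^2$ from the definition of $\omega_{\tau_j(f)}$ enters the final formula.
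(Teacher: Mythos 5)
The paper does not actually prove this statement; it records it as a theorem and cites van der Geer~\cite[Theorem VI.7.5]{VanderGeer} (the earlier form, for narrow class number one, being Oda~\cite[Prop.~16.3]{Oda}). So there is no internal proof to compare your sketch against, and I can only assess it on its own terms.

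Your outline follows the standard strategy of those references --- a Birch/Manin--Shimura integral representation of the twisted $L$-value, interpreted as a pairing of $\omega_{\tau_j(f)}$ against a modular-symbol cycle, followed by a projection to the $f$-isotypic, $\varepsilon_s^*$-eigenspace. That is the right skeleton, and your observation that $-4\pi^2 = (2\pi i)^2$ pins the stated normalization to $n = [F:\Q] = 2$ (van der Geer's Hilbert modular \emph{surface} setting) is correct and worth making explicit. However, several steps that your sketch treats as ``delicate but routine'' are in fact where the real content lies, and as written the argument has genuine gaps.

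First, the classes $P_a$ (products of vertical geodesics from $a/\frakc$ to $\infty$) are \emph{relative} cycles with boundary on the cusps; they do not lie in $H_n(X(\Gammahat),\Q)$. To justify the pairing with $\omega_{\tau_j(f)} \in H^n_{\mathrm{dR}}(X(\Gammahat),\C)$ one must pass through the Eichler--Shimura isomorphism (or the long exact sequence of the pair with the boundary) and use cuspidality of $f$ to kill the boundary contribution; the Hodge-theoretic setup of \Cref{SS:PeriodLattice} lives on the compactification, and this identification has to be made honestly rather than assumed. Second, the unfolding formula you display, with a sum over $(\Z_F/\frakc)^\times$ and the Dirichlet restriction $\overline{\chi_0}$, is the narrow-class-number-one version. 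In general $X(\Gammahat)$ is disconnected with components indexed by $\Cl^+ \Z_F$, the cycle must be distributed over those components, and $\chi$ is a narrow ray class character whose values are determined by $\chi_0$ only together with the sign $\chi_\infty$, via the compatibility $\chi(a\Z_F)=\chi_0(a)\chi_\infty(\sgn a)$ recorded in \cref{sect:periods}. Your sketch does not engage with this and would need to. Third --- and you flag this as the main obstacle, rightly --- the $K$-rationality of $\alpha_\chi$ is not a consequence of ``the Hecke algebra acts through $K$'' alone. One must show that the $f$-isotypic part of the $\Q$-rational homology is a free $K$-module of rank one in each $\varepsilon_s^*$-eigenspace, and that the class $\gamma_s$, normalized in \cref{SS:PeriodLattice} as dual to $\varepsilon_s^*\omega_{\tau_1(f)}$, actually generates the $\Q$-rational line there (not merely the $\C$-line); only then does the $f$-projection of the $\Q$-rational modular symbol produce a genuine $K$-scalar rather than an element of $K\otimes_\Q\R$, and only then is $\alpha_\chi$ independent of $j$. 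This compatibility of the Betti $\Q$-structure with the Hecke $K$-structure and with the chosen period normalization is precisely the content of van der Geer's proof, and it is the step your proposal asserts but does not supply.
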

  Based on the Birch and Swinnerton-Dyer conjecture, it is conjectured that $\alpha_\chi$ actually lies in $\Z_{K}$ for $\mathrm{cond}(\chi) \gg 0$; see \cite[\S\,7]{Bertolini} or \cite[Conjecture~3.3]{Dembele} (when $F$ has narrow class number 1).

While all the terms on the right hand side of the equality can be computed, the same is not immediately true for $\alpha_{\chi}$. In comparison with the Birch and Swinnerton-Dyer conjecture, the $\alpha_\chi$ correspond to some of the invariants like the Tamagawa numbers that can vary for different characters $\chi$ with the same sign $s$. One can use multiple characters $\chi$ for each sign $s$ and use a lattice based method to determine a likely value for $\alpha_{\chi}$. This trick, also called Cremona's trick, is described in \cite[Remark~5.2]{Dembele} and has its origin in \cite[Section~2.11]{Cremona}.

\subsection{\texorpdfstring{$\frakl$}{frakl}-isogeny polynomial} \label[subsection]{subsec:polynomial}

Let $A$ be an abelian variety defined over $\C$ with real multiplication by an order $\mathcal{O}$ in a number field $K$.
Let $\frakl$ be an ideal of $\mathcal{O}$.
For simplicity, we assume $\frakl$ is principal.
We now define a polynomial generating the $\frakl$-isogeny field of the corresponding real multiplication abelian variety.
We work with $\frakl$-isogenies rather than $\frakl$-torsion points because the splitting field of the resulting polynomial cuts out the projective representation $\rmP\!\rho_{f,\frakl}$ (\Cref{cor:maincor}), which is what we need for our Galois realization.

\newcommand{\newG}{e}
Let $\newG$ 
be a Hilbert modular form of weight $k$ of trivial level (below we will use the restriction of a Siegel Eisenstein series), and $z \in \h{g}{}$ be a point corresponding to the $A$ in the Hilbert moduli space, see \eqref{eqn:H_moduli}.

For $\gamma = \begin{pmatrix}a &b\\ c& d\end{pmatrix} \in \GL_2(\mathcal{O})$, we define the factor of automorphy
\begin{equation*}
    j(\gamma, z) \colonequals \prod_{\iota : K \hookrightarrow \R}\frac{\left( \iota(c) + \iota(d)z \right)^k}{\det(\iota(\gamma))^{k}}.
\end{equation*}
Thus, we can define the \defi{$\frakl$-isogeny polynomial}
\begin{equation*}
    T_{\frakl} (x) \colonequals \prod_{\gamma} \left(x - j(\gamma, z)^{-1} \frac{\newG(\gamma z)}{\newG(z)}\right) \in \C[x],
\end{equation*}
where $\gamma$ ranges over some elements of $M_2(\mathcal{O}) \cap \GL_2(K)$ whose determinant generates $\frakl$, such that $z' = \gamma z$ ranges over points corresponding to all abelian varieties $\frakl$-isogenous to $A$. (If $\frakl$ were actually not principal, then this does work, and can be salvaged by doing it adelically.)

This $\frakl$-isogeny polynomial simultaneously generalizes the classical modular polynomial in two distinct directions. First, rather than relying on the classical $j$-invariant, it allows the use of an arbitrary modular function (constructed as a ratio of modular forms) with rational Fourier coefficients. Second, it naturally extends the theory to higher-dimensional abelian varieties with real multiplication, replacing evaluations on the classical $j$-line with evaluations on the Hilbert moduli space.

For practical purposes, it is convenient to use a modular form $\newG$ that can be expressed in terms of the Riemann theta function through the natural embedding $\h{g}{} \hookrightarrow \hSiegel{g}$, see \eqref{eqn:small-period-matrix}.
For $a, b \in \frac{1}{2}\Z^g/\Z^g$, the Riemann theta function with characteristics $a,b$ is defined as
\begin{equation}
\begin{aligned}
    \thetacha{}{a}{b}: \C^g \times \hSiegel{g} &\to \C\\
    (w, Z) &\mapsto \sum_{n\in {\Z}^g} \exp\left( \pi i (n+a)^t Z (n+a) +2 \pi i (n+a)^t(w+b)\right)\,.
\end{aligned}
\end{equation}
For our computations we pick $\newG$ to be the restriction to $\h{g}{}$ of the Siegel Eisenstein series of weight $4$, which can be expressed as the sum of the eighth powers of the theta constants:
\begin{equation}
    E_4(Z) \colonequals \sum_{a,b} \thetacha{}{a}{b}(0, Z)^8 \, .
\end{equation}
(See \cite[p.~405]{Igusa64}.)

Taking $Z$ to be the small period matrix \eqref{eqn:small-period-matrix}, the $\frakl$-isogeny polynomial $T_\frakl(x)$, which can be expressed in terms of the Siegel Eisenstein series evaluated at $Z$, is defined over $F$ as a consequence of \Cref{conj:odaconj}.

\section{The inverse Galois problem for 17T7} \label[section]{sec:17T7}

In this section, we explain in detail the calculation that gives the \textsf{17T7} polynomial in \Cref{mainthm}. The code used to perform these computations is available at~\cite{GitHubRepo17T7} and~\cite{GitHubRepoEichlerShimuraHMF}.

\subsection{Computing the small period matrix}\label[subsection]{subsect:small-period-matrix-computation}
Let $F \colonequals \Q(\sqrt{3})$ and let $\Z_F =\Z[\sqrt{3}]$ be its ring of integers (of discriminant~$12$).  Then $\Z_F$ has class number~$1$ but narrow class number~$2$, with the narrow class group $\Cl^+ \Z_F$ generated by the unique prime $(1+\sqrt{3})$ above $2$.  The narrow Hilbert class field is $F(\sqrt{-1})=\Q(\zeta_{12})$.  

Let $f$ be the Hilbert modular form over $F$ with LMFDB label \lmfdbform{2.2.12.1-578.1-c}. Then $f$ has level $\frakN = 17(1 + \sqrt{3})$, trivial Nebentypus character, and Hecke eigenvalue field $K = K_f = \Q(\nu)$ with LMFDB label \lmfdbnf{4.4.725.1} and defining polynomial
\begin{equation} \label{eqn:heckefield}
x^4 - x^3 - 3x^2 + x + 1\, .
\end{equation}
Nearby is the form $f'$ with label \lmfdbform{2.2.12.1-578.1-d}, which is the quadratic twist of $f$ by the nontrivial character of the narrow class group.  

\begin{remark}
We could equally well compute in this section and the next with the form $f'$; we just chose to work alphabetically.   
\end{remark}

Using \Magma{} (see Demb\'el\'e--Voight \cite{DV} for a description of the algorithms), 
we compute Hecke eigenvalues $a_{\frakp}$ of $f$ for all prime ideals $\frakp$ of $F$ with $\Nm(\frakp) < 80\,000$. %
We form the truncation of the $L$-function using these $a_\frakp$ and, using Hecke characters $\chi$ with conductors up to 25, compute twisted periods as described in \Cref{sect:periods} using \cite{Dokchitser-lfun}.
In fact, to get more precision for our computation with the $a_{\frakp}\!$'s that we computed, we use the fact that $\Omega_j^{++}\Omega_j^{--} + \Omega_j^{+-}\Omega_j^{-+} = 0$, which follows from \cite[Theorem 4.4]{Oda}.
This yields RM moduli points with $80$ decimal digits of precision
\begin{align*}
z_{+-} &\approx 
(2.7829 i, 0.75416 i, 1.4277 i, 5.0448 i)\\
z_{-+} &\approx (0.75416 i, 2.7829 i, 5.0448 i, 1.4277 i)
\end{align*}
as in \eqref{eqn:H_moduli}.
Note that $z_{+-}$ and $z_{-+}$ are, up to precision, related by the double transposition $(1\ 2)(3\ 4)$. This is because in fact our abelian fourfold descends to $\Q$ (the two conjugates over $F$ are isomorphic), whence it suffices to simply consider the first moduli point $z \colonequals z_{+-}$.

We compute that the different ideal $\frakD_{K} = (\Z_K^\sharp)^{-1} = (d)$ of $K$ (cf.\ \Cref{SS:PeriodLattice}) is narrowly principal with generator $d \colonequals -2 \nu^3 + 4\nu^2 + 3\nu + 2$.  Since the different is narrowly principal, the abelian fourfold $\C^g \,/\, \Lambda$ with $\Lambda \coloneq \Lambda_{+-}(\Z_{K}, \Z_{K})$ as in \eqref{eqn:lattice} is principally polarizable; see \Cref{SS:PeriodLattice}.
We note that $\Lambda$ is equipped with the pairing
\[
    \begin{aligned}
        E_{d^{-1}} \colon (\Z_{K} \oplus \Z_{K}) \times (\Z_{K} \oplus \Z_{K}) &\to \mathbb{Q} \\
        (a_1, b_1), (a_2, b_2) &\mapsto \operatorname{Tr}_{K/\mathbb{Q}}(d^{-1}(a_1 b_2 - a_2 b_1)) \, ,
    \end{aligned}
\]
as in \eqref{eqn:lattice-pairing}.

The periods \(\Omega_j^s\) obtained from \Cref{conj:oda} are purely imaginary or purely real; thus the same holds for \(z_s\). Consequently, the complex torus constructed above may be off by a \hbox{\(2\)-isogeny}. 
Indeed, consider any lattice $L \subset \C^g$ invariant under complex conjugating, and let $L'$ be the sublattice generated by the real and totally imaginary parts of $L$.
Then, for every $\lambda \in L$, both $\lambda+\overline{\lambda}$ and $\lambda-\overline{\lambda}$ lie in $L'$. Hence $2\lambda\in L$, so $L/L'$ is killed by $2$.
Equivalently, the natural map
\[
\C^g/L' \longrightarrow \C^g/L
\]
has kernel contained in the \(2\)-torsion. Thus we search over all \(2^g+1=17\) abelian varieties that are \hbox{\(2\)-isogenous} to \(\C^g/\Lambda\) and have RM by \(\Z_K\).

For this particular example, the knowledge that our desired abelian $4$-fold is a Jacobian (see \Cref{sect:Shimura} below) simplifies this step of the calculation.
We find a unique $2$-neighbor $z' \in \h{g}{}$ such that the value of the Schottky modular form at its period matrix has absolute value $< 10^{-56}$. Thus this is the only likely Jacobian among the $2$-neighbors of $z$.

\subsection{Finding the 2-isogeny polynomial}

We compute the $2$-isogeny polynomial for $Z$ as in \cref{subsec:polynomial}, evaluating theta functions using the fast code of Elkies--Kieffer \cite{EK} available in FLINT \cite[\texttt{acb\_theta}]{Flint}.  We find the polynomial
\begin{align*}
    T(x) &\colonequals x^{17} -581020.41645\ldots x^{16} -54729032212.54644\ldots x^{15}\\
    &\quad- 2958404450460894.75024\ldots x^{14} + \cdots
\end{align*}

We are able to recognize the coefficients of $x^{16}$ and $x^{15}$ (which are known to the highest precision) as rational numbers with denominators $D \colonequals 267075169$ and $D^2$, respectively.
Replacing $T(x)$ by $T_D \colonequals D^{17} T(x/D)$ in order to clear this denominator, we are then able to recognize the coefficients of $x^{16}, x^{15}, x^{14},$ and $x^{13}$ as integers $a_1, a_2, a_3,$ and $a_4$.
In order to obtain higher precision approximations for the rest of the coefficients of $T_D$, we use Newton's method on the function that takes a point $z \in \h{4}{}$ to the coefficients of $x^{16}, x^{15}, x^{14},$ and $x^{13}$ of its 2-isogeny polynomial.
This produces a normalized $2$-isogeny polynomial $T_D$ with coefficients that are easily recognized as integers: 
\begin{equation}
\begin{aligned}
T(x) &= x^{17} - 155176125916688 x^{16} - 3903775123456327337126372744 x^{15} - \dots \\
&\qquad\qquad  - 15370284691667761315579594335774216542251094826\cdots 14304
\end{aligned}
\end{equation}
where the constant term has 204 decimal digits.

Applying the \textsf{PARI/GP} \cite{pari} command \texttt{polredabs} \cite{polredabs},
we find the simplified polynomial given in \eqref{eqn:fx17} 
that defines an isomorphic number field.  We verify that this polynomial has Galois group \textsf{17T7} using the \Magma{} commands \texttt{GaloisGroup} and \texttt{GaloisProof}; this uses the method of relative invariants due to Stauduhar~\cite{Stauduhar}, and the implementation is elaborated upon in Fieker--Kl\"uners~\cite{FK}.  The ring of integers of the number field $L$ defined by this polynomial has discriminant $2^{44} \cdot 3^6 \cdot 17^8$ and has now been included in the LMFDB with label \lmfdbnf{17.1.89462021750334834736103424.1}.

The total CPU time was dominated by the computation of the eigenvalues of the Hilbert modular form---we did not keep a precise count of this time (we computed more than we needed), but it was on the order of a few CPU years.

\begin{remark}
    In general, the polynomial $T(x)$ will have coefficients in $F$ and not necessarily in $\Q$.
    In that case, we can recognize its coefficients by considering all embeddings of $T$ into $\C$ simultaneously.
    Then the extension as in \Cref{mainthm} can be found by taking the splitting field of $T$ over $F$, and then taking its normal closure over $\Q$.
\end{remark}

\begin{remark}
Our number field $L$ has class number $3$, a fact which may seem quite remarkable.  Using \textsf{Magma}, we can explicitly construct this Hilbert class field.  In fact, there is some structure behind this unramified extension, as follows.  

Let $S$ be the stabilizer of an element in $\PP^1(\F_{16})$ under the action of \textsf{17T7} by linear fractional transformations, and let $L$ be the fixed field of $S$.  (Its commutator subgroup $S' \trianglelefteq S$ in fact has index $6$.)  
There is an index $3$ subgroup coming from the natural further scaling action on the fixed vector: it is the subgroup  $\F_4^\times \leq \F_{16}^\times$ stabilized by the order $2$ subgroup of $\Gal{\F_{16}}{\F_2}$.  In this way, we obtain a cyclic extension $L' \supseteq L$ of degree $3$ for all \textsf{17T7} extensions.  In our case, the abelian variety is semistable, so the inertia groups at $2$ and $17$ act through a unipotent subgroup and hence the extension $L' \supseteq L$ is unramified.  (See also the proof of \Cref{prop:modularity} below.)
\end{remark}

\subsection{Relation to Shimura curves}\label[subsection]{sect:Shimura}
Although it is not necessary for our method, it is natural to wonder why the abelian fourfold we have constructed numerically seems to be (isogenous to) a Jacobian.  Indeed, in this special case, we have a very exceptional situation and can prove that this is the case.  

First, we set up the notation and do some preliminary calculations.  Let $\frakp_2= (1+\sqrt{3})$ be the unique prime above $2$ in $\Z_F$.  Then $\frakp_2$ generates the narrow class group $\Cl^+ \Z_F \simeq \Z/2\Z$.  Let $B$ be the quaternion algebra over $F=\Q(\sqrt{3})$ ramified at $\frakp_2$ and at one of the two real places.  

Let $\calO$ be an Eichler order of prime level $(17)$.  We claim that $\calO$ is unique up to conjugation in $B^\times$ (i.e., the type set of $\calO$ is trivial).  Indeed, $\calO$ is hereditary \cite[\S 23.3]{Voight:quat} so its idelic normalizer has \cite[Corollary 23.3.14]{Voight:quat}
\[ N_{\widehat{B}^\times}(\widehat{\calO})/(\widehat{F}^\times \widehat{\calO}^\times) = \langle \varpi_2,\varpi_{17} \rangle \simeq \Z/2\Z \times \Z/2\Z, \]
generated by elements $\varpi_2$ supported at $F_{\frakp}$ and $\varpi_{17}$ supported at $F_{17}$ and whose reduced norms are uniformizers (so $\nrd(\varpi_2)=1+\sqrt{3}$ and $\nrd(\varpi_{17})=17$).  Then, as a consequence of strong approximation \cite[Corollary 28.5.10]{Voight:quat} we compute that the type set of $\calO$ is the quotient of $\Cl^+ \Z_F$ by the ideals $\frakp_2$ and $(17)$, so is indeed trivial.  

Now since $B$ is split at the other real place, it yields an embedding $\iota_\infty \colon B \hookrightarrow \M_2(\R)$ unique up to conjugation by $\GL_2(\R)$.  Let $\calO_{>0}^\times$ be the group of units of $\calO$ whose reduced norm is positive at both real places (they are automatically positive at the ramified real place).  Then under $\iota_\infty$, the group $\rmP\!\calO_{>0}^\times = \calO_{>0}^\times/\Z_F^\times$ is a discrete group acting properly on the upper half-plane $\h{}{}$, and the quotient $X^+(\frakp_2; 17) \colonequals \rmP\!\iota_\infty(\calO_{>0}^\times) \backslash \h{}{}$ is a Shimura curve of genus $11$ with signature $(11;2,2,3,3,12,12;0)$ (compact with six elliptic points of the given orders) \cite{Voi:funddom,Rickards}.  Finally, since $(17)$ is narrowly principal there is an Atkin--Lehner involution $w_{17} \in N_{B^\times}(\calO)_{>0}$, and the further quotient $X^+(\frakp_2;17)/\langle w_{17} \rangle$ has genus $4$ (more precisely, signature $(4;2,2,2,2,2,2,2,2,2,3,12;0)$).  

\begin{prop} \label[prop]{prop:descendtoQQ}
There exists a smooth, projective, geometrically integral curve $X'$ defined over $\Q$ with the property that $X'(\C) \simeq X^+(\frakp_2;17)/\langle w_{17} \rangle$ and $\Jac X'_F$ is isogenous to $A_{f'}$ where $f'$ is the Hilbert modular form with label \lmfdbform{2.2.12.1-578.1-d}. 
\end{prop}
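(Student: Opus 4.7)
The claim has two parts: producing a $\Q$-model $X$ of the complex curve $C := X^+(\frakp;17)/\langle w_{17}\rangle$, and identifying $\Jac X_F$ up to isogeny with $A_{f'}$. My plan is to handle them separately and link them via the Jacquet--Langlands correspondence.

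For the descent, I would observe that $B$ and its Galois conjugate $B^\sigma$ are $F$-algebras ramified at the same finite prime $\frakp_2$ (which is $\Gal{F}{\Q}$-stable because it is the unique prime above $2$) but at \emph{opposite} real places of $F$; in particular $B \not\simeq B^\sigma$ as $F$-algebras. Nonetheless $\sigma$ itself provides a $\Q$-algebra isomorphism $B \xrightarrow{\sim} B^\sigma$, and both the Eichler level $(17)$ and the Atkin--Lehner element $w_{17}$ are $\sigma$-stable. These ingredients assemble to a descent datum on the canonical model of $X^+(\frakp;17)/\langle w_{17}\rangle$ over $F$, yielding by effective Galois descent a smooth projective curve $X/\Q$ whose complex points agree with $C$.

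For the Jacobian identification, I would apply the Jacquet--Langlands correspondence (via \Cref{thm:EichShim-itsathm}, building on \cite{Zhang}): the new part of $\Jac X^+(\frakp;17)$ decomposes, up to isogeny over $F$, as a sum of the abelian varieties $A_g$ attached to Galois orbits of Hilbert newforms $g$ of parallel weight $2$, trivial character, and level divisible by $\frakp_2$. The involution $w_{17}$ respects this decomposition, and its $(+1)$-eigenspace is isogenous to $\Jac X_F$, of dimension $4$ (the genus of the quotient). Enumerating in the LMFDB the Hilbert newforms of trivial character at level $578$ yields, up to Galois conjugacy, only $f$ and its twist $f'$ by the nontrivial narrow class character, each with a $4$-dimensional attached abelian variety; a Hecke-eigenvalue calculation then identifies $w_{17}$-eigenvalues on $f$ and $f'$, pinning down which orbit sits in the $(+1)$-eigenspace. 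Matching dimensions forces $\Jac X_F$ to be isogenous to $A_{f'}$, which the $L$-function identity of \Cref{conj:EichShim} confirms.

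The hardest step will be the descent: while the heuristic above is standard, formalising it requires care because $B$ itself fails to be $F$-isomorphic to $B^\sigma$, so one cannot appeal off-the-shelf to a Shimura-variety descent theorem. In practice, the cleanest route is to bypass the abstract descent by exhibiting an explicit $\Q$-model of $X$ — as the paper does in \Cref{thm:explicitg4} — and then verify the isomorphism $X(\C) \simeq C$ and the Jacobian identity by comparing Hecke data and $L$-functions rather than cocycles.
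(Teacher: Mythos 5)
Your plan for the Jacquet--Langlands half is close in spirit to the paper's, but the descent half has a genuine gap, and there is a smaller but real gap in the identification step.

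On descent: you correctly note that $B \not\simeq B^\sigma$ over $F$, and that $\sigma$ furnishes a $\Q$-algebra isomorphism, but the claim that "these ingredients assemble to a descent datum" is exactly the part that needs proof and you leave it unsubstantiated. The obstruction is real: the idelic Shimura curve has two geometric components, each of which is only defined over the narrow class field $F(\sqrt{-1})$, not over $F$, and a $\Q$-isomorphism $B \to B^\sigma$ only gives an abstract isomorphism $X^\sigma \cong X$ of complex curves -- to descend one must choose such isomorphisms satisfying the cocycle condition, and for a curve with nontrivial automorphisms this is not automatic (field of moduli need not equal field of definition). The paper instead invokes the theorem of Doi--Naganuma to conclude the field of moduli of each component is $\Q$, then observes that the elliptic point of order $12$ is unique so the isomorphisms among conjugates are canonically \emph{pointed}, and then applies the pointed-descent theorem of Sijsling--Voight to get an actual $\Q$-model together with its Atkin--Lehner quotient. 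Your fallback -- exhibit the explicit plane model and compare -- is also not a proof of this proposition: the curve in \Cref{thm:explicitg4} was found numerically from periods, \Cref{prop:modularity} certifies only that its Jacobian matches $A_{f'}$, and the paper explicitly remarks that the isomorphism of \emph{curves} with the Shimura quotient is not proved. Using it here would also be circular.

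On the Jacobian identification: your dimension count is right that the $w_{17}$-eigenspace of eigenvalue $+1$ on the full (two-component) Shimura curve has dimension $8 = 4 + 4$, but this only shows the Jacobian of the two-component quotient is isogenous over $F$ to $A_f \times A_{f'}$. Both $f$ and its quadratic twist $f'$ have the same $w_{17}$-eigenvalue $+1$, so a Hecke-eigenvalue check on $w_{17}$ cannot tell which one sits on which component. The paper pins this down using the \emph{other} Atkin--Lehner operator $w_{\frakp_2}$, which is a degree-one involution interchanging the two components, and the component is picked out by the sign opposite to the $w_{\frakp_2}$-eigenvalue of the form. Your outline omits this step.
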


\begin{proof}
The idelic Shimura curve \cite[\S 38.7]{Voight:quat} attached to $\widehat{\calO}^\times$, namely
\[ B_{>0}^\times \, \backslash \, (\widehat{B}^\times \times \h{}{}) \, / \, \widehat{\calO}^\times \]
has two components, indexed by $\Cl^+ \Z_F$ \cite[(38.7.14)]{Voight:quat}.  Its canonical model (due to Shimura \cite{Shimura} and Deligne \cite{Deligne}) is defined over the reflex field, which is $F$; and the components are defined over the narrow class field of $F$, namely $F(\sqrt{-1})$.  The Atkin--Lehner involution $w_{17}$ (preserving components) is defined over $F$.  

This matches the associated calculation of Hilbert modular forms of parallel weight $2$.  The space of Hilbert modular forms of level $\frakN$ which are new at $\frakp_2$ are those of levels $\frakp_2$ and $\frakN=17\frakp_2$, but there are \href{https://www.lmfdb.org/ModularForm/GL2/TotallyReal/?field_label=2.2.12.1&level_norm=2}{no newforms of level norm $2$} so all are \href{https://www.lmfdb.org/ModularForm/GL2/TotallyReal/?field_label=2.2.12.1&level_norm=578}{newforms of level norm $578$}. 
The total dimension of this space is $1+1+4+4+6+6=22=2\cdot 11$.
Moreover, the forms with Atkin--Lehner eigenvalue $1$ for $(17)$ form a space of dimension $4+4=8=2\cdot 4$, so the Jacobian of the quotient of the above idelic Shimura curve by the Atkin--Lehner involution $w_{17}$ is isogenous (over $F$) to the product of the abelian varieties attached to $f'$, the quadratic twist of $f$ (given by \lmfdbform{2.2.12.1-578.1-c} and computed in the previous section) by the nontrivial narrow class character.

By a theorem of Doi--Naganuma \cite[Corollary, p.~449]{DN}, since the type number of $\calO$ is $1$ (their ``narrow sense'', see 1.4 in loc.\ cit.), the field of moduli of either component is~$\Q$.  Finally, the elliptic point of order $12$ is unique, so the provided isomorphisms among the conjugates over $F(\sqrt{-1})$ must be pointed.  We claim moreover that the Atkin--Lehner involution $w_{17}$ also has field of moduli $\Q$: indeed, the subgroup of geometric automorphisms of a curve of genus $\geq 2$ fixing a point is a finite cyclic group (in characteristic $0$), so an involution is uniquely determined by its set of fixed points when nonempty.  We confirm from the signature that the involution $w_{17}$ has fixed (CM) points (or more simply, a fixed-point free involution of a curve of genus $11$ has quotient of genus $6$ by Riemann--Hurwitz).  It follows then by pointed descent \cite[Theorem A]{SV} that the curve, the point, and the quotient map descend canonically to $\Q$!  

This applies to both components, so we obtain two curves of genus $4$ over $\Q$.  Upon extension to $F$, their Jacobians give the two abelian fourfolds under consideration, so one corresponds to $A_f$ over $F$.  We can isolate this component directly: there is still an Atkin--Lehner involution attached to $\frakp_2$ defined over $F$ defined on the idelic Shimura curve: it has degree~$1$ (since $\frakp_2$ is ramified) and interchanges the two components.  The quotient by this automorphism picks out a component according to its eigenvalue for this involution, opposite to that of the Hilbert modular form.
\end{proof}




\begin{remark}
A moduli-theoretic proof of the descent in \Cref{prop:descendtoQQ} should also be possible: the data that defines the moduli problem is defined over $F$; in particular the Atkin--Lehner involutions are defined over~$F$, and we find that there is an isomorphism to the moduli problem which is conjugate under the nontrivial element of $\Gal{F}{\Q}$. This is another way to view the aforementioned result of Doi--Naganuma.  This ensures that the field of moduli is $\Q$, and pointed descent then gives field of definition $\Q$. \end{remark}

In \cite{Bouchet}, Bouchet presents a collection of invariants that classify genus $4$ curves over an algebraically closed field; in \cite{bouchet2} he shows, given such a collection of invariants, how to produce a genus $4$ curve with these invariants.
Starting with the small period matrix $Z$ \eqref{eqn:small-period-matrix}, we first apply the methods of Hanselman--Pieper--Schiavone \cite{HPS} to produce approximate equations over $\C$ of a genus $4$ curve corresponding to $Z$.
We then compute numerical approximations of the Bouchet invariants of this curve, recognize them as rational numbers, and then reconstruct a genus $4$ curve with these invariants using \cite{bouchet2}.
We obtain the curve $X_0$ (the notation chosen to indicate descent to $\Q$) given by
\begin{equation} \label{eqn:genus4curve}
\begin{aligned}
     0 &= -8 x^2 + 8 x y + 17 y^2 - 34 x z - 2 y z - 28 z^2 - 10 x w - 9 y w - 18 z w + 2 w^2,\\
     0 &= 4 x^3 - 6 x^2 y - 6 x y^2 + 12 x^2 z + 6 x y z + 24 y^2 z -12 x z^2 - 24 z^3 + 2 x^2 w + 7 x y w\\
     &\qquad + 4 y^2 w + 4 x z w - 13 y z w - 8 z^2 w - 20 x w^2 - 3 z w^2 - 12 w^3
     \end{aligned}
 \end{equation}
inside the projective space $\PP^3$ with coordinates $x,y,z,w$.

\begin{remark}
If we had computed in the previous two sections with $f'$ instead of $f$, since the associated abelian fourfolds being isomorphic over $\C$ we would have obtained the same invariants and hence the same reconstruction \eqref{eqn:genus4curve}.  
\end{remark}

A computation with discriminants shows that this model has good reduction away from the primes $2$, $3$, $7$, and $17$.  Projection from the point $(-12:2:4:3) \in X_0(\Q)$ and a change of variables to minimize yields the singular affine plane model as in \eqref{eqn:singmod}.  Projection from the point $(-2:-2:3:1)$ gives the model
\begin{equation}
\begin{aligned}
&20 x^5 + 34 x^4 y + 172 x^4 - 6 x^3 y^2 + 60 x^3 y + 576 x^3 - 61 x^2 y^3 - 
    606 x^2 y^2 + 204 x^2 y  \\
    &\qquad + 776 x^2 - 75 x y^4 - 822 x y^3 - 972 x y^2 + 
    168 x y + 384 x - 34 y^5 \\
    &\qquad - 338 y^4 - 688 y^3 - 448 y^2 - 32 y + 32 = 0
    \end{aligned}
    \end{equation}
which reduces modulo $7$ to a curve of genus $4$, giving a model for $X_0$ with good reduction at $7$.  

\begin{prop} \label[prop]{prop:modularity}
Let $A_0 \colonequals \Jac(X_0)$ be the Jacobian of $X_0$.  Then $A_{0,F}$ is isogenous to $A_{f'}$ over $F$.
\end{prop}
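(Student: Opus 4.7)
The plan is to establish $A'_F \sim A_{f'}$ via an effective Faltings--Serre comparison, leveraging the explicit model of $X'$ together with the data for $A_{f'}$ already recorded in \Cref{prop:descendtoQQ}. First, one must verify that $A' = \Jac X'$ carries real multiplication by $\Z_K$, where $K = K_{f'}$ is the Hecke field of \eqref{eqn:heckefield}. Numerically this is essentially automatic, since $X'$ was reconstructed from a period matrix at a point $z'$ in the Hilbert modular variety for $K$; to make it rigorous one can certify a lower bound on $\End_F(A')$ from the period matrix using algorithmic techniques that certify endomorphism algebras from numerical period data, and then combine this with an upper bound obtained from Frobenius action on Tate modules at a few primes of good reduction to pin down $\End_F(A')$ exactly.

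Next, I would compare the conductor of $A'_F$ with the level $17(1+\sqrt{3})$ of $f'$. The discriminant of the model \eqref{eqn:genus4curve} is supported only at $\{2,3,7,17\}$, so $A'$ has good reduction away from this set; inspecting the remaining places of $F$ (by computing a regular model of $X'$, or by Ogg--Saito-type calculations) pins down the conductor exponents. With matching conductors, both $A'_F$ and $A_{f'}$ are $\GL_2$-type abelian fourfolds over $F$ with RM by $\Z_K$ and conductor $17(1+\sqrt{3})$. An effective version of Faltings--Serre (for example Livné's criterion, suitably adapted to the $\GL_2$-type setting) reduces the isogeny question to verifying
\[ L_\frakp(A'_F,T) = \prod_{j=1}^{4} \bigl(1 - \tau_j(a_\frakp(f')) T + \Nm(\frakp) T^2\bigr) \]
for a computable finite collection of primes $\frakp$ of $F$. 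The matching of the mod-$2$ residual representations is already in hand from the $2$-isogeny polynomial used to construct $X'$ together with the \textsf{17T7} image computation of \Cref{sec:17T7}, which shrinks this collection considerably.

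The principal obstacle will be the first step: rigorously establishing the RM, since $X'$ exists only as a numerical reconstruction. A practical workaround that bypasses endomorphism certification is to count points on reductions $X'_{\F_\frakp}$ for many primes $\frakp$ of good reduction and verify directly that each Frobenius characteristic polynomial factors as the predicted product above with $a_\frakp \in \Z_K$ equal to the Hecke eigenvalue of $f'$; then Faltings--Serre, in the effective form described above, needs only the primes dictated by its explicit bound. Once that finite list has been verified, the isogeny $A'_F \sim A_{f'}$ follows unconditionally, completing the proof.
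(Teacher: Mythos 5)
Your approach differs from the paper's in an essential way: the paper does not use a Faltings--Serre/Livn\'e comparison at all. Instead it certifies $\End A' \simeq \Z_{K}$ (with endomorphisms defined over $F$) rigorously via the methods of Costa--Mascot--Sijsling--Voight \cite{endos}, then finds a rational $29$-torsion point $D = (1{:}1{:}4) - (1{:}0{:}0)$ on $A'$. Since $29$ splits in $\Z_K$ with a prime $\frakp$ of norm $29$, this forces the residual representation $\rho_{A'_F,\frakp}^{\textup{ss}} \simeq \chi_1 \oplus \chi_2$ to be \emph{reducible}. The paper then invokes the residually-reducible modularity theorem of Skinner--Wiles \cite[Theorem A]{SkinnerWiles}, verifying its hypotheses (irreducibility of $\rho$ via a prime of norm $59$, the $F(\chi_1/\chi_2) = F(\zeta_{29})$ totally imaginary abelian condition, good ordinary reduction at $29$, crystalline local behavior), to conclude that $\rho_{A'_F,\frakp^\infty}$ comes from a parallel weight $2$ Hilbert newform. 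Semistability (deduced from the unipotent mod-$\frakp$ action over $F(\zeta_{29})$ via Grothendieck's criterion) constrains the level to $\frakN$, and the $22$-dimensional newspace at level $578$ is then disambiguated with a single Hecke eigenvalue to pin $g = f'$. Your Faltings--Serre route is a priori reasonable, but has specific difficulties the paper avoids: Livn\'e's criterion in its standard form requires the mod-$2$ residual image to be trivial (or a $2$-group), whereas here it is all of $\SL_2(\F_{16})$, so you would need a substantially generalized Faltings--Serre argument with large residual image; moreover your claim that matching of the mod-$2$ representations ``is already in hand from the $2$-isogeny polynomial used to construct $X'$'' is circular, since $X'$ was numerically reconstructed from that very period data and the rigorous match of $2$-torsion fields is precisely the content of \Cref{prop:notnicolas}, which is proved separately. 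Your suggested shortcut of bypassing RM certification by point counts alone also does not work for Faltings--Serre in the $\GL_2(\Z_K)$-setting, since one must know that Frobenius acts $\Z_K$-linearly to even formulate the rank-$2$ comparison. The paper's choice to exploit the rational $29$-torsion point is precisely what makes the modularity-lifting route tractable with known theorems.
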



Our computations are performed in \textsf{Magma}.

\begin{proof}
We certify using the methods of Costa--Mascot--Sijsling--Voight \cite{endos} that indeed $\End A_0 \simeq \Z[(1+\sqrt{5})/2]$  (over $\Q$) and $\End A_0^{\textup{al}}$ is isomorphic with the ring of integers $\Z_{K}$ of the quartic field $K$, and the endomorphisms are defined over $F=\Q(\sqrt{3})$ (i.e.\ $\End A_0^{\textup{al}} = \End (A_0)_{\Q(\sqrt3)}$).
The endomorphism ring is certified by exhibiting an explicit correspondence, i.e., by representing endomorphisms by their graphs in $\Sym^4 X_0 \times \Sym^4 X_0$, via the birational map $A_0 \to \Sym^4 X_0$.  (In our case, this took about 8 CPU days to compute and takes over 32 MB to store.)

Next, the closure in the projective plane of the model \eqref{eqn:singmod} has points $(1:1:4)$ and $(1:0:0)$, and we verify that the difference $D \colonequals (1:1:4)-(1:0:0)$ gives $[D] \in A_0(\Q)$ with order $29$: using an algorithmic version of the Riemann--Roch theorem, there exists a rational function $h$ whose divisor is $29D$.  Moreover, $A_0$ has good ordinary reduction at $p=29$---we have $L(X_{\F_{29}},T)=1 + 45T^2 + 2187T^4 + 45\cdot 29^2 T^6 + 29^4 T^8$ and $2187 \equiv 12 \not\equiv 0 \pmod{29}$.  

From now on, to ease notation in the proof we abbreviate $A=(A_0)_F$.  The prime $29$ factors as $29\Z_{K} = \frakp^2 \frakp'$ where $\Nm \frakp = 29$.  Over $F$, since $A$ has RM the natural $29$-adic representation factors as 
\[ \rho_{A,\frakp^\infty} \colon \Galois_F \to \GL_2(K_{\frakp}). \]
Let $\rho_{A,\frakp} \colon \Galois_F \to \GL_2(\F_{\frakp})$ be the associated semisimple representation modulo $\frakp$ as in \eqref{eqn:redmodell}.  We just saw there is a torsion point of order $29$ (in fact already defined over $\Q$), so the semisimplification is reducible: $\rho_{A,\frakp} \simeq \chi_1 \oplus \chi_2$ where $\chi_2$ is the trivial character and $\chi_1 = \varepsilon_{29,F} \colon \Galois_F \to \F_{29}^\times$ is the mod $29$ cyclotomic character over $F$.

We use this to immediately conclude that $A$ has semistable reduction at all primes $\frakq \neq \frakp$.  Indeed, $\rho_{A,\frakp}$ over $F(\zeta_{29})$ is unipotent, so
Grothendieck's inertial semistable reduction criterion applies.

We now verify the hypotheses of Skinner--Wiles \cite[Theorem~A]{SkinnerWiles} in the residually reducible case.  By \Cref{thm:HilbertGalrep}, the representation $\rho_{A,\frakp^{\infty}}$ is unramified away the primes above $29$ and the primes of bad reduction.  This representation is also irreducible, since $\det(1-\rho_{A,\frakp^{\infty}}(\Frob_\frakq)T)$ is irreducible over $K_{\frakp}$ for any prime $\frakq$ of norm $59$.  There is a unique place $v$ of $F$ above $29$.  Moreover,
\[
        F(\chi_1/\chi_2)=F(\zeta_{29}),
\]
which is abelian over $\Q$.  Since this field is totally imaginary, complex conjugation acts nontrivially on it; hence $\chi_1/\chi_2)(c)=-1$ for every complex conjugation $c$.  The character $\chi_1/\chi_2$ is
ramified at $v$, so in particular $(\chi_1/\chi_2)|_{D_v}\ne 1$.  

It remains to check the ordinary local condition at $v$.  The variety $A_0$ has good ordinary reduction at $29$: indeed
\begin{equation}
        L(X_{0,\F_{29}},T)
        =
        1+45T^2+2187T^4+45\cdot 29^2T^6+29^4T^8,
\end{equation} and the middle coefficient satisfies $2187 \equiv 12 \not\equiv 0 \pmod{29}$.  Thus $\rho|_{D_v}$ is crystalline and ordinary.  Let $\widetilde{\chi}_i$ denote the Teichm\"uller lift of $\chi_i$.  After choosing a basis compatible with the ordinary filtration, we have
\begin{equation}
        \rho|_{D_v}
        \simeq
        \begin{pmatrix}
        \psi_1^{(v)}\widetilde{\chi}_1 & * \\
        0 & \psi_2^{(v)}\widetilde{\chi}_2
        \end{pmatrix}.
\end{equation}
Here $\psi_i^{(v)} \equiv 1 \pmod{\frakp}$.  Hence the image of $\psi_2^{(v)}$ lies in $1+\frakp\mathcal \Z_{K_{\frakp}}$, a pro-$29$ group.  Also, the ordinary filtration identifies the upper-left character
with the unit-root quotient, so $\psi_1^{(v)}|_{I_v}$ has finite image.  (For further detail see Brinon--Conrad \cite[\S 8.3, Proposition 8.3.4, Theorem 8.3.6]{BrinonConrad}.)  This is precisely the local hypothesis in Skinner--Wiles, the conclusion of which is that $\rho_{A,\frakp^{\infty}}$ is modular, hence $A=(A_0)_F$ is modular. 

After noting that $\det \rho=\varepsilon_{29^\infty,F}$ is the $29$-adic cyclotomic character restricted to $F$, we conclude that $\rho$ is associated to a Hilbert modular form.  This form must have parallel weight $2$ (by the determinant), and its level is squarefree by semistability and must be supported within the primes of bad reduction of $A_0$.  

This shows that the level of the form divides $\sqrt{3}\frakN$.  At level $\frakN$ the dimension is $22$ and either prime above $11$ distinguishes the newforms and only $f'$ is a match.  At the remaining levels, using eigenvalues at small primes we find no match---the largest space considered is the space of newforms of level $\sqrt{3}\frakN$, of dimension $142$.  
\end{proof}

\begin{remark}
It follows from \Cref{prop:descendtoQQ} and \Cref{prop:modularity} that the Shimura curve $X'$ and the exhibited curve $X_0$ have isogenous Jacobians over $F$, but it unfortunately does not prove that they are isomorphic as curves.  
\end{remark}

\begin{remark}
The methods of Voight--Willis \cite{VW} give another technique for computing equations of Shimura curves (of arbitrary genus).  Since we already had the period lattice, we found numerical reconstruction to be easier here; but we hope to use this technique in future work, as it would for example also provide us (numerically) the images of CM points.  
\end{remark}

In theory, it is now possible to separately compute the $2$-isogeny representation of $\Jac(X)$ and see that it agrees with the one computed numerically from the Hilbert modular form~$f$.  With significant effort, we can also carry this out in practice.  

\begin{prop} \label[prop]{prop:notnicolas}
The field $\Q(A_0[2])$ of $2$-torsion of $A_0$ is the splitting field for \eqref{eqn:fx17}.
\end{prop}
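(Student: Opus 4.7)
The strategy is to identify the $\Galois_\Q$-representation on $A'[2]$ with the representation $\rho_{f,\frakl}$ extended to $\Q$ via the descent law of \Cref{mainthm}, and then invoke the construction of \eqref{eqn:fx17} as an $\frakl$-isogeny polynomial. Here $\frakl$ denotes the unique prime of $\Z_{K_f}$ above $2$, with residue field $\F_{16}$.

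First I would reduce to comparing Galois representations. By \Cref{prop:modularity}, $A'_F$ is isogenous to $A_{f'}$ over $F$, so the rational $2$-adic Tate modules are isomorphic as $\Galois_F$-representations. Because $\rho_{f,\frakl}$ is absolutely irreducible --- its image contains $\SL_2(\F_{16})$, as shown in \Cref{thm:17T7ineff} --- the Brauer--Nesbitt theorem gives that the mod-$\frakl$ reductions $A'_F[2]$ and $A_{f'}[\frakl]$ are already isomorphic as $\Galois_F$-modules. Moreover $f'$ is the quadratic twist of $f$ by the nontrivial narrow class character, which reduces trivially mod $\frakl$; hence $\rho_{f',\frakl} \simeq \rho_{f,\frakl}$ and $F(A'[2]) = F(\rho_{f,\frakl})$.

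Next I would descend to $\Q$. Both $\Q(A'[2])$ (since $A'$ is defined over $\Q$) and $\Q(\rho_{f,\frakl})$ (by \Cref{mainthm}, with group \textsf{17T7}) are Galois over $\Q$ with the same base change to $F$. I claim $F \subseteq \Q(A'[2])$: otherwise the nontrivial element of $\Gal{F(A'[2])}{\Q(A'[2])}$ would project onto $\Gal{F}{\Q}$ and act on the image of $\rho_{A',\frakl}$ by the outer Frobenius of $\F_{16}$ (this being the content of the descent condition verified for $f$), contradicting its triviality on $A'[2]$. Hence $\Q(A'[2]) = F(A'[2]) = \Q(\rho_{f,\frakl})$. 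On the other hand, by the construction recalled in \Cref{subsec:polynomial}, the $\frakl$-isogeny polynomial $T(x)$ has roots indexed by $\PP^1(\F_{16})$, so its splitting field is $\Q(\textup{P}\rho_{f,\frakl})$; since $\SL_2(\F_{16}) = \PSL_2(\F_{16})$ in characteristic $2$, this equals $\Q(\rho_{f,\frakl})$. Finally, \eqref{eqn:fx17} is obtained from $T(x)$ by rescaling and \texttt{polredabs}, both of which preserve the splitting field.

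The main obstacle is that $T(x)$ itself was computed numerically under Oda's conjecture and recognized via LLL, so the identification of its splitting field with $\Q(\rho_{f,\frakl})$ is \emph{a priori} only conditional. To make the argument unconditional, I would compare the two candidate \textsf{17T7}-extensions of $\Q$ directly: both are unramified outside $\{2,3,17\}$, and computing the characteristic polynomial of $\rho_{A',\frakl}(\Frob_\frakp)$ from point counts on $X' \bmod \frakp$ and matching against $a_\frakp(f) \bmod \frakl$ for finitely many primes --- enough to determine $\rho$ by a Chebotarev-style argument given that both representations have image \textsf{17T7} --- pins down the isomorphism class and thereby identifies the two fields.
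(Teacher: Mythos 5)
Your route is genuinely different from the paper's, and it has a real gap at the end. The paper does not use the Galois-representation identification at all: it computes $\Q(A'[2])$ directly and explicitly by finding the $120$ tritangent planes of the quadric--cubic model of $X'$ in $\PP^3$ (odd theta characteristics), first numerically by homotopy continuation and Newton refinement, then certified exactly; the difference of two tritangent divisors produces all of $A'[2]$, so the $2$-division field is the splitting field of a concrete degree-$120$ polynomial $g(x)$. The field equality with the splitting field of \eqref{eqn:fx17} is then established by a combinatorial device: a factor $h(x)$ of $g(x)$ over a degree-$51$ subfield $K'$ of the splitting field of \eqref{eqn:fx17} defines a bipartite incidence graph $\Gamma$ on the roots, Galois acts on $\Gamma$, and one checks (working modulo a split prime) that both coordinate projections $\Aut(\Gamma)\to\Galois(f'(x))$ and $\Aut(\Gamma)\to\Galois(g(x))$ are \emph{injective}, forcing the splitting fields to coincide.

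The gap in your plan is in the last step. You correctly identify that the identification of the splitting field of \eqref{eqn:fx17} with $\Q(\rho_{f,\frakl})$ is only conditional on Oda's conjecture, and you say you will ``make the argument unconditional'' by matching $\det(1-\rho_{A',\frakl}(\Frob_\frakp)T)$ (from point counts on $X'$) against $a_\frakp(f)\bmod\frakl$. But that comparison only (re)establishes $\rho_{A',2}\simeq\rho_{f,\frakl}$, i.e.\ $\Q(A'[2])=\Q(\rho_{f,\frakl})$ --- which your first two paragraphs already obtain from \Cref{prop:modularity}. It says nothing about the numerically produced polynomial \eqref{eqn:fx17}, which is the object you actually need to pin down. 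To close the gap along your lines you would instead have to compare Frobenius conjugacy classes in $\Gal{\Q(A'[2])}{\Q}$ (from $A'[2]$) against Frobenius cycle types obtained by factoring \eqref{eqn:fx17} modulo $p$, with an effective Chebotarev-type bound controlling how many primes suffice to distinguish two \textsf{17T7}-extensions of $\Q$ with the given ramification --- a nontrivial additional computation that the proposal leaves unexamined. The paper's tritangent-plus-graph argument sidesteps all of this by producing both fields as explicit splitting fields and comparing them directly.
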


\begin{proof}
We consider the scheme of tritangent planes to $X_0$ in $\PP^3$ (those planes that intersect the degree $6$ curve $X_0$ in three points, each tangent, equivalently effective odd theta characteristics); it is a classical fact that the difference between two tritangent divisors yields all $2$-torsion points of the Jacobian. 
In \textsf{julia}, we compute using homotopy continuation methods \cite{homotopycontinuation} numerical approximations to the nonsingular points of the tritangent scheme \cite{Breiding}.
We then refine these using Newton's method to high precision.
We recover polynomials with real coefficients which vanish on these points and using continued fractions recognize their coefficients as rational numbers.  

With further significant effort factoring these polynomials, we find an \emph{exact} representation of the $120$ tritangent planes over a field of degree $120$.  We then verify using exact methods (\emph{a posteriori}) that these planes are indeed tritangents.  We conclude that the $2$-division field is equal to the splitting field of a degree $120$ polynomial $g(x)$.

Finally, we show that the splitting fields of the polynomials $f(x)$ of degree $17$ in \eqref{eqn:fx17} and $g(x)$ are isomorphic.
Computing the splitting fields separately and checking for isomorphism would be far too expensive.
We start with $K=\Q(\alpha)$ where $f(\alpha)=0$.
We compute using relative invariants for subgroups of permutaton groups (see \cite{Elsenhans,FK})  the degree $3$ extension $K' \supseteq K$ inside the splitting field of $f$; let $K'=\Q(\alpha')$ with $f'(x)$ the minimal polynomial of $\alpha'$.
We are then able to factor $g(x)$ over $K'$ into $3$ irreducible factors of degree $40$; let $h(x) \in K'[x]$ be one of these factors.
Then there exists $H(x,y) \in \Q[x,y]$ such that $H(\alpha',y)=h(y)$  with $\deg_x H(x,y) < 51$ and $\deg_y H(x,y)=40$.  

Now we make a bipartite graph $\Gamma$: 
\begin{itemize}
\item the vertices are the roots $\{\alpha'_i\}_i$ of $f'(x)$ and $\{\beta_j\}_j$ of $g(x)$, respectively, in a splitting field; and
\item there is an (undirected) edge between $\alpha_i'$ and $\beta_j$ if and only if $H(\alpha_i',\beta_j) = 0$.  
\end{itemize}
It is immediate that $\Galois(f'(x)g(x))$ acts on $\Gamma$ via its natural permutation action on the roots, giving an inclusion into $\Aut(\Gamma)$.

The graph $\Gamma$ measures the relationship between the splitting fields that comes about from the factor $h(x)$.
(Two extremes: if $h(x)$ were a linear factor, then the graph would be a simple matching between the roots of $g(x)$ and a subset of roots of $f'(x)$; if at the other extreme $h(x) = g(x)$ and the factor were trivial, then the graph would be a complete bipartite graph giving no new information.)  

The natural projection maps onto permutations of either subset of roots give (surjective) homomorphisms 
\begin{equation}
p_{f'} \colon \Aut(\Gamma) \to \Galois(f'(x)) \quad \text{and} \quad p_g \colon \Aut(\Gamma) \to \Galois(g(x)).
\end{equation}

We compute in this case, working with roots modulo a prime where both polynomials split completely, that in fact each projection $p_{f'}$ and $p_g$ is \emph{injective}!  Therefore we have injective homomorphism $\Galois(f'(x)g(x)) \hookrightarrow \Aut(\Gamma) \xrightarrow{\sim} \Galois(f'(x))$ so that a splitting field of $g$ is contained in that of $f'$; and vice versa, whence they are equal.  
\end{proof}

\begin{remark} \label[remark]{rmk:nicolas}
There is an alternative $p$-adic approach to some of the above computations, as follows.

The division polynomial algorithm of Mascot \cite{Mascot20} takes as input a smooth, projective curve $X$ of genus $g$ with Jacobian $A \colonequals \Jac(X)$, a prime $\ell$, and a prime \hbox{$p \neq \ell$} of good reduction for $X$; it returns as output a rational function $\alpha \in \Q(A)$, a $p$-adic approximation of the corresponding division polynomial $F_\alpha(x) = \prod_{0 \neq P \in A[\ell]}(x - \alpha(P))$, and the matrix $[\Frob_p] \in \GL_{2g}(\F_\ell)$ of the Frobenius automorphism at $p$ acting on $A[\ell]$.  
Running this algorithm in our case with $\ell=2$ and $p=5$ gives a polynomial of degree $2^8-1=255=17 \cdot 15$; with significant computational effort, we verify that it defines a number field that contains the field $K$ defined in \eqref{eqn:fx17}.  

The method of Mascot can be adapted to carve out certain Galois submodules, and these ideas extend to get the degree $17$ polynomial directly, as follows.  We choose a prime $p$ such that the factorization of $F_\alpha(x)$ modulo $p$ consisting of $15$ irreducible factors of degree $17$.  Then in some basis, $\rho(\Frob_p)$ has the form $\begin{pmatrix} \epsilon & 0 \\ 0 & \epsilon^{-1} \end{pmatrix} \in \SL_2(\F_{16}) \leq $ \textrm{17T7}, where $\F_{16}^\times = \langle \epsilon \rangle \simeq C_{15}$.  The smallest such prime is $p=61$.  Since the scalar matrix $\begin{pmatrix} \epsilon & 0 \\ 0 & \epsilon \end{pmatrix}$ centralizes the semisimple element $\rho(\Frob_p)$ in $\GL_2(\F_{16})$, it can be computed explicitly as  $E \in \F_2[\Frob_{p}]$, a polynomial in $\Frob_p$, by linear algebra.  Then from the matrix of $\Frob_p$, we compute the orbits $\Omega_1,\dots,\Omega_{17}$ of $E$ on $A[2] \smallsetminus \{0\}$ and instead form 
\[ \prod_{i=1}^{17} \left(x-\sum_{P \in \Omega_i} \alpha(P)\right) \in \Q_p[x]; \]
good rational approximations yield a polynomial of degree $17$ in $\Q[x]$, which we quickly confirm yields our field $K$.  

This does not give a rigorous result, but in principle it could be made rigorous (working with elements in $A(K)$ using their $p$-adic approximations, and certifying that they are $\ell$-torsion).  

We are grateful to Nicolas Mascot for sharing these calculations and ideas, which given the curve (!)\ takes only about a CPU hour!
\end{remark}

\end{document}